\title{Properties of the resolution of almost Gorenstein algebras}
\author{Giuseppe Zappal\`a}
\subjclass[2010]{13 H 10, 14 N 20, 13 D 40}
\keywords{Resolution, Graded Betti numbers, Gorenstein ring, almost complete intersection}
\DeclareSymbolFont{rsfscript}{OMS}{rsfs}{m}{n}
\DeclareSymbolFontAlphabet{\mathrsfs}{rsfscript}
\DeclareSymbolFont{AMSb}{U}{msb}{m}{n}
\DeclareSymbolFontAlphabet{\mathbb}{AMSb}
\DeclareSymbolFont{eufrak}{U}{euf}{m}{n}
\DeclareSymbolFontAlphabet{\gothic}{eufrak}
\newcommand\CI{\operatorname{CI}}
\newcommand\rank{\operatorname{rank}}
\newcommand\rw{\Rightarrow}
\newcommand\bu{\bullet}
\newcommand\rh{\rightarrow}
\newcommand\Gor{\operatorname{Gor}}
\newcommand\zz{{\mathbb Z}}
\newcommand{\bo}{\bigoplus}
\newcommand{\op}{\oplus}
\newcommand{\vt}{\vartheta}
\newcommand{\ord}{\operatorname{ord}}
\newtheorem{thm}{Theorem}[section]
\newtheorem{prp}[thm]{Proposition}
\newtheorem{cor}[thm]{Corollary}
\theoremstyle{definition}
\newtheorem{dfn}[thm]{Definition}
\theoremstyle{remark}
\newtheorem{rem}[thm]{Remark}
\newtheorem{exm}[thm]{Example}
\newcounter{num}
\begin{document}



\begin{abstract}
We study properties of the resolution of almost Gorenstein artinian algebras $R/I,$
i.e. algebras defined by ideals $I$ such that $I=J+(f),$ with $J$ Gorenstein ideal and $f\in R.$ 
Such algebras generalize the well known almost complete intersection artinian algebras. 
Then we give a new explicit description of the resolution and of the graded Betti numbers of 
almost complete intersection ideals of codimension $3$ and we characterize the ideals whose graded 
Betti numbers can be achieved using artinian monomial ideals.
\end{abstract}

\maketitle

\section{Introduction}
\markboth{\it Introduction}{\it Introduction}
Let $I\subset R=k[x_1,\ldots,x_c]$ be a homogeneous ideal. In commutative algebra and algebraic geometry it is a very important subject to understand the structure of the graded minimal free resolution of $I.$ The simplest case is when $I$ is generated by a regular sequence. In this case the Koszul complex provides the answer to this problem, in any codimension. In the other cases we know satisfying answers only in few situations. 
\par
In codimension $2,$ when $I$ is a perfect ideal, 
the structure of the resolution is given by the Hilbert-Burch theorem (see \cite{Hi}, \cite{Bu}).
This allowed, for instance, to characterize the graded Betti numbers of such ideals (see \cite{Ga}, \cite{CGO}).
\par
In codimension $3,$ when $I$ is a Gorenstein ideal, 
the structure of the resolution is given by the Eisenbud-Buchsbaum theorem (see \cite{BE}).
Also in this case, due to this very important result, it is possible to characterize the graded Betti numbers 
(see \cite{Di}, \cite{RZ2}, \cite{CV}).
\par
When $I$ is perfect of codimension greater than $2$ or $I$ is Gorenstein of codimension greater than $3$ very little is known about the structure the resolution. One of the most studied cases are Gorenstein ideals in codimension $4$ (see \cite{KM},\cite{Re}), although it is not known a description of the graded Betti numbers.
\par
Another very important case consists of perfect ideals of codimension $c$ generated by $c+1$ generators, i.e. almost complete intersection ideals. These ideals have the advantage that are directly linked to a Gorenstein ideal in a complete intersection, so there is the expectation that properties of the Gorenstein resolutions can affect their resolution. In codimension $3$ the structure of the resolution of such ideals was studied in \cite{BE} and in \cite{RZ4}.
\par
In section $3$ we study the resolution of the almost Gorenstein algebras, a very important generalization of the notion of almost complete intersection. Also these ideals are directly linked to the Gorenstein ideals. In particular, in Theorem \ref{fc}, we state an important duality properties for the graded minimal free resolution.
\par
In section $4$ we focus on codimension $3$ almost complete intersection algebras and we give an explicit new characterization of the graded Betti numbers. Finally we characterize the ideals, in this class, whose graded Betti numbers can be achieved using monomial ideals 
(Corollary \ref{mon1} and Proposition \ref{mon2}).

\section{Preliminary facts} 
\markboth{\it Preliminary facts}
{\it Preliminary facts}
Throughout the paper $k$ will be a infinite field and $R:=k[x_1,\ldots,x_c],$ $c\ge 3,$ will be the standard graded polynomial $k$-algebra.
We consider only homogeneous ideals.
\par
Let $A=R/I$ be a standard graded artinian $k$-algebra, where $I$ is a homogeneous ideal of $R.$ Recall that $A$ is said a 
{\em almost complete intersection algebra} if $I$ is minimally generated by $c+1$ elements (in this case $I$ is said an almost complete intersection ideal). Every almost complete intersection ideal is directly linked to a Gorenstein ideal in a complete intersection.
\par
It is possible to generalize this notion in the following way.
\begin{dfn}
Let $A_Q=R/I_Q$ be a standard graded artinian $k$-algebra, where $I_Q$ is a homogeneous ideal of $R.$ 
We say that $A$ is an {\em almost Gorenstein algebra} if 
\begin{itemize}
\item[1.] $I_Q=I_Z+(f),$ where $I_Z$ is a Gorenstein ideal of codimension $c$ and $f\in (x_1,\ldots,x_c),$ $f\not\in I_Z.$
\item[2.] $A_Q$ is not a Gorenstein algebra.
\end{itemize}
\end{dfn}
In particular, if $I_Z$ is a complete intersection ideal of codimension $c$, then $A_Q$ is an almost complete intersection algebra.
\par
Gorenstein ideals of codimension $3$ are better understood, because of the structure theorem of Buchsbaum and Eisenbud (see \cite{BE}).
The possible graded Betti numbers for Gorenstein ideal of codimension $3$ were described in \cite{Di} and 
relatively to the Hilbert function in \cite{RZ2}. 
\par
More precisely let $(d_1,\ldots, d_{2n+1}),$ $d_1 \le\ldots\le d_{2n+1},$ be a sequence of positive integers. It is the sequence of the degrees of the minimal generators of a Gorenstein ideal of codimension $3$ iff
\begin{itemize}
\item [1)] $\vt=\frac{1}{n} \sum_{i=1}^{2n+1} d_i$ is an integer;
\item [2)] $\vt > d_i + d_{2n+3-i}$ for $2 \le i \le n$ (Gaeta conditions).
\end{itemize}

Let $\delta=(d_1,\ldots, d_{2n+1}),$ $d_1 \le\ldots\le d_{2n+1},$ be the sequence of the degrees of the minimal generators of a Gorenstein ideal of codimension $3.$ 
Let $\Gor(\delta)$ be the set of all Gorenstein ideals $I \subset k[x_1,x_2,x_3],$ whose 
sequence of the degrees of the minimal generators is $\delta.$
Moreover we define 
\begin{multline*}
  \CI_{\delta}=\{(a_1,a_2,a_3)\in(\zz^{+})^3\mid a_1 \le a_2 \le a_3 \mbox{ and }\exists I \in \Gor(\delta) \mbox{ containing} \\
	\mbox{a regular sequence of degrees } (a_1,a_2,a_3)\}.
\end{multline*}
To study ideals linked to Gorenstein ideals in a complete intersection, in codimension $3,$ we need the following result.

\begin{thm}\label{c3b}
The set $\CI_{\delta}$ has a unique minimal element.
\end{thm}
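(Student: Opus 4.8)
The plan is to reduce the statement to a single, sufficiently general member of $\Gor(\delta)$ and then produce the minimum by a short lattice argument, the real work being a transversality statement for that general ideal. Throughout I order triples componentwise and write $a\wedge b=(\min(a_i,b_i))_i$.

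\emph{Reduction to a general ideal.} By the Buchsbaum--Eisenbud theorem, every $I\in\Gor(\delta)$ is the ideal of $2n\times 2n$ Pfaffians of an alternating $(2n+1)\times(2n+1)$ matrix whose $(i,j)$ entry is homogeneous of degree $\vt-d_i-d_j$; let $\mb$ denote the irreducible affine variety of such matrices. Since the minimal free resolution, hence the Hilbert function, of $R/I$ depends only on $\delta$, the graded pieces $I(M)_a$ have constant dimension as $M$ varies over $\mb$ and so form a vector bundle. For a fixed triple $a=(a_1,a_2,a_3)$ I would consider the incidence bundle $\{(M,g_1,g_2,g_3):g_i\in I(M)_{a_i}\}\to\mb$; the locus where $R/(g_1,g_2,g_3)$ is artinian, i.e. where $(g_1,g_2,g_3)$ is a regular sequence, is open, and because a vector-bundle projection is an open map, the set $U_a\subseteq\mb$ of matrices whose Pfaffian ideal contains a regular sequence of degrees $a$ is open. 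As $\mb$ is irreducible, a nonempty $U_a$ is dense. Since $R/I$ is artinian we have $(x_1^N,x_2^N,x_3^N)\subseteq I$ for $N\gg 0$, so $\CI_\delta\neq\emptyset$ and its minimal element, if any, has all coordinates $\le N$; intersecting the finitely many dense opens $U_a$ with $a_i\le N$ and using that $k$ is infinite, I get a single $I_0\in\Gor(\delta)$ with $\CI_\delta$ equal, in that range, to $S:=\{a:I_0\text{ contains a regular sequence of degrees }a\}$. It therefore suffices to show $S$ has a least element.

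\emph{The lattice step.} I claim $S$ is closed under $\wedge$. Granting this, the conclusion is formal: $S$ is nonempty, bounded below (each $a_i\ge d_1$), and meet-closed, so fixing $s_0\in S$ the set $T=\{s\wedge s_0:s\in S\}\subseteq S$ is finite and meet-closed, hence has a least element $m\in S$, and $m\le s\wedge s_0\le s$ for every $s\in S$. Thus $m$ is a least, in particular the unique minimal, element, which is exactly the assertion of the theorem.

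\emph{Meet-closure (the main obstacle).} Geometrically, a regular sequence $g_1,g_2,g_3\in I_0$ of degrees $a$ means: a nonzero $g_1$; a form $g_2$ sharing no factor with $g_1$, so that $V(g_1,g_2)\subset\pp^2$ is the finite set of $a_1a_2$ points it cuts out; and a $g_3\in I_0$ vanishing at none of those points. Given regular sequences realizing $a$ and $b$, the plan is to build one of degrees $a\wedge b$ one form at a time, at each stage choosing a general form of $I_0$ of the required smaller degree and using prime avoidance over the infinite field $k$ to keep it transversal to the forms already selected; the genericity of $I_0$ secured above should place the relevant graded pieces in general enough position for such minimal-degree choices to exist. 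The hard part will be precisely this transversality in the smallest degree: one must rule out that lowering a coordinate to $\min(a_i,b_i)$ forces every available form of $I_0$ either to acquire a common factor with, or to vanish on the finite scheme determined by, the earlier forms. I would control this through the ranks of the evaluation maps $(I_0)_c\to k^{N}$ at the $N$ relevant points, which are computable from the Hilbert function fixed in the first step; the existence of a form nonvanishing at all of them once $c$ reaches the corresponding coordinate of $a\wedge b$ is then forced, since both $a$ and $b$ already supply such forms in their own larger degrees. (Should meet-closure prove elusive, the alternative is to pin down the least element of $S$ explicitly by a greedy computation from the same Hilbert function and verify directly that it lies below every element of $S$.)
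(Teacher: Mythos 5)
The paper does not actually prove this theorem: its ``proof'' is a citation of Theorem 3.6 of \cite{RZ3}, where the minimal element of $\CI_{\delta}$ is computed explicitly. Your proposal is therefore an attempt at a self-contained argument and must stand on its own. Its first step (specializing to a single generic $I_0$ via the Buchsbaum--Eisenbud parametrization, openness of the regular-sequence locus, irreducibility of the space of alternating matrices) is sound in outline, modulo a fixable omission concerning elements of $\CI_{\delta}$ outside the box $[1,N]^3$: since the Hilbert function, hence the socle degree, is constant on $\Gor(\delta)$, one can take $N$ uniform, note $R_N\subseteq I$ for every $I\in\Gor(\delta)$, and replace any $g_i$ of degree $>N$ by a general element of $R_N$ using prime avoidance; without this, the least element of $S$ in the box is not yet known to lie below all of $\CI_{\delta}$.

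The genuine gap is meet-closure, and it is not a technical loose end: it is the whole theorem. Indeed $\CI_{\delta}$ is easily seen to be an up-set (replace each $g_i$ by $m_ig_i$ with $m_i$ general of the appropriate degree), and it satisfies the descending chain condition; for such a set, closure under $\wedge$ is \emph{equivalent} to having a unique minimal element (if $m_1,m_2$ are minimal, then $m_1\wedge m_2$ lies in the set and is below both, forcing $m_1=m_2$; conversely a least element plus the up-set property gives meet-closure). So your lattice step merely reformulates the statement, and everything rests on the transversality sketch, whose central claim is a non sequitur: the forms that $a$ and $b$ ``already supply'' avoid the finite sets of points of \emph{their own} regular sequences, not the set $V(g_1,g_2)$ attached to the lower-degree forms $g_1,g_2$. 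Concretely, if the ideal generated by $(I_0)_{\le c}$ is not artinian, its nonempty base locus is contained in $V(g_1,g_2)$ for \emph{every} choice of $g_1,g_2\in I_0$ of degrees $\le c$, and every element of $(I_0)_c$ vanishes on it, so no genericity of $I_0$ or of the chosen forms can produce a regular sequence with top degree $c$. Such obstructions are real and not of componentwise type: for instance every $(a_1,a_2,a_3)\in\CI_{\delta}$ satisfies $a_1+a_2+a_3>\vt$ (compare socle degrees of $R/I$ and of the complete intersection, and use linkage to exclude equality when $I$ has more than three generators), and the componentwise minimum of two triples satisfying this inequality can violate it. Showing that such violations never occur for two \emph{achievable} triples is precisely the content of the explicit computation in \cite{RZ3} that the paper invokes; your parenthetical fallback (determine the least element greedily from the Hilbert function and verify it is a lower bound) is that computation, and it is not carried out. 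As it stands, the proposal proves the theorem only up to a lemma equivalent in strength to the theorem itself.
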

\begin{proof}
See Theorem 3.6 in \cite{RZ3}, where this minimal element is explicitly computed.
\end{proof}
We will denote the only minimal element of $\CI_{\delta}$ with $\min(\delta).$


\section{Almost Gorenstein algebras}
\markboth{\it Almost Gorenstein algebras}{\it Almost Gorenstein algebras}
Let $A_Q=R/I_Q$ be an almost Gorenstein algebra. Then $I_Q=I_Z+(f),$ where $I_Z$ is a Gorenstein ideal. Let $e$ be the socle degree of $A_Z=R/I_Z.$
Since $f\in (x_1,\ldots,x_c),$ $f\not\in I_Z$ we get that $1\le \deg f\le e.$ We are interested on the graded minimal free resolutions of such ideals. We denote by $\vartheta_Z$ the degree of the only last syzygy of $I_Z.$ Of course $\vartheta_Z=e+c.$

\begin{prp}
Let $A_Q=R/I_Q$ be an almost Gorenstein algebra, with $I_Q=I_Z+(f)$ as above. Then $I_G=I_Z:I_Q$ is a Gorenstein ideal of codimension $c$ and $\vartheta_G=\vartheta_Z-\deg f.$
\end{prp}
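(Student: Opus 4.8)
The plan is to reduce everything to a computation inside the Gorenstein Artinian algebra $A_Z=R/I_Z$, using its Poincar\'e (Gorenstein) duality. First I would observe that, since $I_Q=I_Z+(f)$, one has
$$I_Z:I_Q=(I_Z:I_Z)\cap(I_Z:(f))=I_Z:(f),$$
so that $I_G/I_Z=\operatorname{Ann}_{A_Z}(\bar f)$, where $\bar f$ denotes the image of $f$ in $A_Z$. Multiplication by $\bar f$ then produces an exact sequence of graded $A_Z$-modules
$$0\to (I_G/I_Z)(-\deg f)\to A_Z(-\deg f)\xrightarrow{\ \cdot\,\bar f\ }A_Z\to A_Q\to 0,$$
from which I read off the key identification $A_G(-\deg f)\cong(\bar f)=I_Q/I_Z$ as graded $R$-modules.

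Next I would prove that $A_G$ is Gorenstein by locating its socle. Since $f\notin I_Z$, the element $\bar f\in(A_Z)_{\deg f}$ is nonzero, and $\deg f\le e$. Gorenstein duality supplies a perfect pairing $(A_Z)_{\deg f}\times(A_Z)_{e-\deg f}\to(A_Z)_e\cong k$, so there exists $\bar g\in(A_Z)_{e-\deg f}$ with $\bar f\bar g\ne 0$; hence the principal ideal $(\bar f)$ contains the one-dimensional socle $(A_Z)_e$ of $A_Z$. Because the socle of any submodule $M\subseteq A_Z$ equals $M\cap\operatorname{socle}(A_Z)$, the socle of $(\bar f)$ is $(\bar f)\cap (A_Z)_e=(A_Z)_e$, which is one-dimensional and concentrated in degree $e$. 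Transporting this through the isomorphism $A_G\cong(\bar f)(\deg f)$ shows that $A_G$ has one-dimensional socle in degree $e-\deg f$, so $A_G$ is an Artinian Gorenstein algebra. Its codimension is $c$ because $I_G\supseteq I_Z$ is a proper ideal (as $\bar f\ne 0$ forces $\operatorname{Ann}_{A_Z}(\bar f)\ne A_Z$) containing an ideal of height $c$.

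For the degree of the last syzygy I would invoke the general relation, recorded above as $\vartheta_Z=e+c$, that a codimension $c$ Artinian Gorenstein algebra with socle degree $s$ has its last syzygy in degree $s+c$. Applying this to $A_G$, whose socle degree was just computed to be $e-\deg f$, gives
$$\vartheta_G=(e-\deg f)+c=\vartheta_Z-\deg f,$$
as claimed.

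I expect the main obstacle to be the second paragraph: correctly invoking Gorenstein duality to show that $(\bar f)$ meets the socle of $A_Z$, and keeping precise track of the degree shift by $\deg f$ so that the socle degree of $A_G$ comes out to be $e-\deg f$ rather than $e$. Everything else is formal.
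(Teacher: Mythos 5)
Your proof is correct, but it takes a genuinely different route from the paper's. The paper argues entirely at the level of free resolutions: from the liaison exact sequence $0 \to (A_Q)^{\vee}(-\vartheta_Z) \to A_Z \to A_G \to 0$ it builds a (possibly non-minimal) resolution of $A_G$ as a mapping cone of the dualized resolution of $A_Q$ mapped to the resolution of $A_Z$, and after cancelling the redundant summand $K_1'^{\vee}(-\vartheta_Z)$ it concludes that the last free module in the minimal resolution of $A_G$ is $R(-(\vartheta_Z-d^*))$; having rank one in the last spot gives Cohen--Macaulay type one, so Gorensteinness and the value of $\vartheta_G$ come out in one stroke. You instead work inside the Artinian algebra $A_Z$ itself: the identity $I_Z:I_Q=I_Z:(f)$ and the multiplication-by-$\bar f$ sequence identify $A_G(-d^*)$ with the principal ideal $(\bar f)\subseteq A_Z$; Gorenstein (Poincar\'e) duality of $A_Z$ shows $(\bar f)$ contains the socle $(A_Z)_e$, hence $A_G$ is Artinian with one-dimensional socle concentrated in degree $e-d^*$, so it is Gorenstein of codimension $c$, and then $\vartheta_G=(e-d^*)+c=\vartheta_Z-d^*$ by the same relation $\vartheta_Z=e+c$ that the paper itself takes for granted. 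Your degree bookkeeping (the shift by $\deg f$, the perfect pairing argument, the fact that the socle of a submodule of $A_Z$ is its intersection with $(A_Z)_e$) is all sound. What each approach buys: yours is more elementary and self-contained, avoiding mapping cones and the cancellation step, and it makes the socle degree of $A_G$ explicit, which is only implicit in the paper; the paper's computation, on the other hand, produces the entire resolution of $A_G$, and that mapping-cone technique is exactly what is recycled (in mirror form) to prove Proposition \ref{resq}, so the paper's choice streamlines the rest of Section 3.
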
 
\begin{proof}
Let
 $$0 \rh F_c \rh F_{c-1} \rh \ldots \rh F_1 \rh R \rh A_Q \rh 0$$
and
 $$0 \rh K_c \rh K_{c-1} \rh \ldots \rh K_1 \rh R \rh A_Z \rh 0$$
be graded minimal free resolutions. Since $A_Z$ is Gorenstein $K_c \cong R(-\vartheta_Z).$ 
Moreover $F_1 \cong K_1'\oplus R(-d^*),$ where $K_1'$ is a direct summand of $K_1$ and $d^*=\deg f.$
Since $I_Z\subseteq I_Q$ we can use liason theory. If we set $A_G=R/I_G$, we get the exact sequence
  $$0 \rh (A_Q)^{\vee}(-\vartheta_Z) \rh A_Z \rh A_G \to 0.$$
So we can compute a graded free resolution of $A_G$ by mapping cone. Finally we can delete the summand $K_1'^{\vee}(-\vartheta_Z)$ in the tail of resolution, so the last module in a minimal graded free resolution of $A_G$ is isomorphic to $R(-\vartheta_Z+d^*).$
\end{proof}

\begin{rem} \label{remd}
Note that $\vartheta_Z-d^*=\vartheta_G.$
We set $d=2\vartheta_Z-\vartheta_G.$ Note that $d=\vartheta_Z+d^*=\vartheta_G+2d^*,$ where $d^*=\deg f.$
\end{rem}

\begin{prp} \label{resq}
A graded resolution (possibly not minimal) of $A_Q$ is
 \begin{multline*}
  0 \rh G_{c-1}(-d^*) \rh G_{c-2}(-d^*)\oplus K_{c-1} \rh \ldots \\
	\ldots \rh G_{1}(-d^*)\oplus K_2 \rh R(-d^*)\oplus K_{1} \rh R \rh A_Q \rh 0.
 \end{multline*}
\end{prp}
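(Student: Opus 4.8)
The plan is to build a resolution of $A_Q$ as a mapping cone arising from linkage. First I would establish the exact sequence
$$0\rh A_G(-d^*)\rh A_Z\rh A_Q\rh 0.$$
Because $I_Q=I_Z+(f)$, the module $I_Q/I_Z$ is generated by the class of $f$, so $g\mapsto gf+I_Z$ defines a surjection $R\rh I_Q/I_Z$ with kernel $I_Z:f$. Exactly as in the previous proposition, $I_G=I_Z:I_Q=I_Z:f$, whence $I_Q/I_Z\cong A_G(-d^*)$, the twist recording $\deg f=d^*$; moreover $A_G\ne 0$ since $f\notin I_Z$ forces $I_G\ne R$. Substituting into $0\rh I_Q/I_Z\rh A_Z\rh A_Q\rh 0$ yields the desired sequence.

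Next I would shift the minimal resolution of $A_G$ by $-d^*$ and lift the inclusion $\iota\colon A_G(-d^*)\rh A_Z$ to a comparison map $\varphi_\bu$ into the minimal resolution of $A_Z$. The associated mapping cone is a graded free resolution of $A_Q$ (possibly non-minimal) of length $c+1$, whose $i$-th term is $G_{i-1}(-d^*)\op K_i$ for $1\le i\le c$ (with $G_0=R$), together with $R$ in degree $0$ and $G_c(-d^*)$ in degree $c+1$. To obtain the stated resolution I must cancel $G_c(-d^*)$ in degree $c+1$ against $K_c$ in degree $c$. By Remark \ref{remd} we have $\vartheta_G+d^*=\vartheta_Z$, so $G_c(-d^*)\cong R(-\vartheta_Z)\cong K_c$, and the component of the cone differential from $G_c(-d^*)$ into $K_c$ is precisely $\varphi_c$, a degree-zero map of rank-one free modules, i.e. multiplication by a scalar.

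The heart of the argument is to show this scalar is nonzero. Tensoring $\varphi_\bu$ with $k$ shows that $\varphi_c\otimes k$ is the map $\Tor^R_c(\iota,k)\colon\Tor^R_c(A_G(-d^*),k)\rh\Tor^R_c(A_Z,k)$. Computing $\Tor^R_c(-,k)$ by the Koszul complex on $x_1,\dots,x_c$ identifies it on any finite length module $M$ with $\operatorname{Soc}(M)(-c)$, the top Koszul homology being the annihilator of $\gothic m$ in $M$; under this identification $\varphi_c\otimes k$ becomes $\operatorname{Soc}(\iota)(-c)$. Since $\iota$ is injective and the socle of a submodule is its intersection with the ambient socle, $\operatorname{Soc}(\iota)$ is injective, and $\operatorname{Soc}(A_G(-d^*))\ne 0$ because $A_G$ is Gorenstein; hence $\varphi_c$ is a nonzero scalar, so an isomorphism. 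Cancelling the pair $\bigl(G_c(-d^*),K_c\bigr)$ then deletes homological degree $c+1$ and strips the $K_c$ summand in degree $c$, leaving exactly the resolution in the statement. I expect the only real difficulty to be this non-vanishing of $\varphi_c$; the remainder is routine mapping-cone and cancellation bookkeeping.
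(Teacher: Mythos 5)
Your proposal is correct, and it follows the paper's overall strategy---the liaison short exact sequence followed by a mapping cone---but it deviates in two worthwhile ways. Where the paper writes the kernel as $(A_G)^{\vee}(-\vartheta_Z)$ and then needs the self-duality $G_{\bu}\cong G_{\bu}^{\vee}(-\vartheta_G)$ of the Gorenstein resolution (plus Remark \ref{remd}) to recognize its minimal resolution as $G_{\bu}(-d^*)$, you identify the kernel directly: $I_Q/I_Z$ is cyclic, generated by the class of $f$, with annihilator $I_Z:f=I_Z:I_Q=I_G$, hence $I_Q/I_Z\cong A_G(-d^*)$; this removes the appeal to duality altogether at no cost. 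More importantly, you address a point the paper's proof silently elides: the mapping cone has length $c+1$, with top term $G_c(-d^*)\cong R(-\vartheta_Z)$, and the stated length-$c$ resolution results only after cancelling that term against $K_c$, which requires the comparison component $\varphi_c\colon G_c(-d^*)\rh K_c$ to be a unit. Your justification of this is sound: by minimality of the two resolutions, $\varphi_c\otimes k$ computes $\Tor_c^R(\iota,k)$, which the Koszul complex identifies with the restriction of $\iota$ to socles, and this is injective and nonzero because $A_G\ne 0$ has finite length. (A shortcut to the same conclusion: $A_Q$ is artinian, so $\pd A_Q=c$ by Auslander--Buchsbaum, hence the last differential of the length-$(c+1)$ cone must contain a unit entry, and by minimality of $G_{\bu}(-d^*)$ that unit can only sit in the component $\varphi_c$.) In short, your argument is complete and in fact supplies a justification that the paper's own proof leaves implicit.
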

\begin{proof}
In general we have the short exact sequence
  $$0 \rh (A_G)^{\vee}(-\vartheta_Z) \rh A_Z \rh A_Q \to 0.$$
Let $G_{\bu}$ and $K_{\bu}$ be graded minimal free resolution of $A_G$ and $A_Z$ respectively. By the duality properties of the Gorenstein resolutions we have that 
  $$G_{\bu}\cong G_{\bu}^{\vee}(-\vartheta_G).$$
Consequently, by Remark \ref{remd}
  $$G_{\bu}(-d^*)\cong G_{\bu}^{\vee}(-\vartheta_G-d^*)\cong G_{\bu}^{\vee}(-\vartheta_Z).$$
So a resolution (which may be not minimal) of $A_Q$ is
 \begin{multline*}
  0 \rh G_{c-1}(-d^*) \rh G_{c-2}(-d^*)\oplus K_{c-1} \rh \ldots \\
	\ldots \rh G_{1}(-d^*)\oplus K_2 \rh R(-d^*)\oplus K_{1} \rh R \rh A_Q \rh 0.
 \end{multline*}
\end{proof}

One of the simplest cases is when $\deg f=e.$ 

\begin{cor}
Let $A_Q=R/I_Q$ be an almost Gorenstein algebra, with $I_Q=I_Z+(f)$ and $\deg f=e,$ the socle degree of $A_Z.$ Let 
 $$0 \rh R(-(e+c)) \rh K_{c-1} \rh \ldots \rh K_1 \rh R \rh A_Z \rh 0$$
be a graded minimal free resolution of $A_Z.$
Let us suppose that $I_Z$ is generated in degrees less than $e.$
\par
Then a graded minimal free resolution of $A_Q$ is
 $$0 \rh R(-(e+c-1))^{\binom{c}{c-1}} \rh K_{c-1}\oplus R(-(e+c-2))^{\binom{c}{c-2}} \rh \ldots $$
 $$\ldots \rh K_1\oplus R(-(e-1))^{\binom{c}{1}} \rh K_1\oplus R(-e) \rh R \rh A_Q \rh 0.$$
\end{cor}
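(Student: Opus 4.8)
The plan is to reduce everything to Proposition \ref{resq} by first identifying the linked Gorenstein algebra $A_G$ explicitly. Since $\deg f=e$ equals the socle degree, the class $\bar f$ spans the one--dimensional socle $(A_Z)_e$, so $A_Q=A_Z/(\bar f)$ and $I_G=I_Z:I_Q=I_Z:f={\gothic m}=(x_1,\ldots,x_c)$. In the notation of Remark \ref{remd} this is consistent with $d^*=e$ and $\vartheta_G=\vartheta_Z-e=c$, i.e. $A_G$ has socle degree $0$. Hence $A_G=k$ and its minimal free resolution $G_{\bu}$ is the Koszul complex on $x_1,\ldots,x_c$, so $G_j\cong R(-j)^{\binom{c}{j}}$.

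First I would substitute $G_j\cong R(-j)^{\binom{c}{j}}$ and $d^*=e$ into the (possibly non--minimal) resolution of $A_Q$ produced by Proposition \ref{resq}. Since $G_{j}(-e)\cong R(-(e+j))^{\binom{c}{j}}$, this yields exactly the complex in the statement, with $F_i\cong K_i\oplus R(-(e+i-1))^{\binom{c}{i-1}}$ for $1\le i\le c-1$ and $F_c\cong R(-(e+c-1))^{\binom{c}{c-1}}$; the top term $G_c(-e)\cong R(-(e+c))\cong K_c$ has already been cancelled in forming the mapping cone, which is why the resolution has length $c$. So only minimality remains to be proved.

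The heart of the argument is to show that the differentials have all their entries in ${\gothic m}$. The maps internal to $K_{\bu}$ are minimal by hypothesis, and the maps internal to the shifted Koszul strand are given by the variables, hence lie in ${\gothic m}$; the only blocks that could contain a unit are the comparison maps $\varphi_{i-1}\colon R(-(e+i-1))^{\binom{c}{i-1}}\rh K_{i-1}$ coming from the lift of $(A_G)^{\vee}(-\vartheta_Z)\hookrightarrow A_Z$. An entry of $\varphi_{i-1}$ is homogeneous of degree $(e+i-1)-d$, where $d$ runs over the generator degrees of $K_{i-1}$, so a unit can occur precisely when $K_{i-1}$ has a generator in degree $e+i-1$, that is when $\beta_{i-1,\,e+i-1}(A_Z)\neq 0$. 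For $i=1$ this is impossible because $K_0=R$ sits in degree $0\neq e$, and for $i=2$ it is excluded by the hypothesis that $I_Z$ is generated in degrees $<e$, so that $K_1$ has no generator of degree $e+1$.

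The main obstacle is the remaining range $3\le i\le c$, where I would invoke the Gorenstein self--duality $K_{\bu}\cong K_{\bu}^{\vee}(-\vartheta_Z)$. This gives $\beta_{i-1,\,e+i-1}(A_Z)=\beta_{c-i+1,\,c-i+1}(A_Z)$, so a unit entry would force a generator of $K_{c-i+1}$ in internal degree equal to its homological degree, i.e. a nonzero linear Betti number $\beta_{m,m}(A_Z)$ with $m=c-i+1\ge1$. Since the minimal generator degrees $\tilde t_m$ of $K_{\bu}$ strictly increase from $\tilde t_0=0$, one has $\tilde t_m\ge m$, with $\tilde t_m=m$ only if the whole linear strand is present down to a degree--$1$ generator of $I_Z$; thus $\beta_{m,m}(A_Z)\neq0$ would force $\beta_{1,1}(A_Z)\neq0$. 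Ruling this out (equivalently, assuming $I_Z\subseteq{\gothic m}^2$, which is the point where the statement is genuinely used, the case $i=c$ being the dangerous one) eliminates every unit entry and proves minimality. I expect this duality--plus--monotonicity step, together with verifying that no hidden cancellation survives in the top term $F_c$, to be the delicate part of the proof.
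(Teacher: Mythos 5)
Your construction of the complex is exactly the paper's: you identify $I_G=I_Z:I_Q=(x_1,\ldots,x_c)$ because $\bar f$ spans the socle of $A_Z$, take $G_{\bu}$ to be the Koszul complex, and substitute into Proposition \ref{resq}, cancelling the top terms; the paper does precisely this. Where you part ways with the paper is minimality. The paper's entire argument is that $A_Q$ is the truncation of $A_Z$ at the socle degree ($A_{Q,i}=A_{Z,i}$ for $i\le e-1$ and $A_{Q,i}=0$ for $i\ge e$), ``so such a resolution is minimal''; you instead examine the blocks of the mapping-cone differential and, using the self-duality $K_{\bu}\cong K_{\bu}^{\vee}(-\vartheta_Z)$, reduce a possible unit entry in a comparison map to $\beta_{m,m}(A_Z)\neq 0$ for some $m\ge 1$, hence, by the strict growth of minimal shifts, to $\beta_{1,1}(A_Z)\neq 0$. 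That analysis is correct and substantially sharper than the paper's one-line claim.

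The gap is in your closing step. You eliminate $\beta_{1,1}(A_Z)\neq0$ by ``assuming $I_Z\subseteq{\gothic m}^2$'' and present this as the point where the hypothesis of the statement is used; but the hypothesis is that $I_Z$ is generated in degrees less than $e$, and a linear generator has degree $1<e$, so it is not excluded. Moreover this gap cannot be repaired, because your own criterion detects a counterexample to the corollary as stated: take $c=3$, $I_Z=(x_1,x_2^3,x_3^3)$ (so $e=4$ and the generator degrees $1,3,3$ are all less than $e$) and $f=x_2^2x_3^2$. Since $x_1$ is regular on $R/(x_2^3,x_3^3,x_2^2x_3^2)$, the minimal free resolution of $A_Q$ is the tensor product of the Hilbert--Burch resolution $0\rh R(-5)^2\rh R(-3)^2\op R(-4)\rh R$ with $0\rh R(-1)\rh R$, so its Betti numbers are $1,4,5,2$, whereas the corollary predicts $1,4,6,3$. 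The discrepancy is exactly one copy of $R(-6)$ in $F_2$ and in $F_3$: the comparison map carries the Koszul generator complementary to $x_1$ isomorphically onto the summand $R(-6)\subset K_2$ dual to the linear generator, i.e. the unit entry you feared in the case $i=c$ really occurs. So your argument becomes a complete and correct proof once the hypothesis is strengthened to include $I_Z\subseteq{\gothic m}^2$; note that the paper's own Hilbert-function argument is insufficient for the same reason, since in the example above $A_Q$ is still the truncation of $A_Z$ and yet the mapping cone is not minimal.
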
 
\begin{proof}
If we set $I_G=I_Z:I_Q,$ then $I_G=(x_1,\ldots,x_c).$ Hence 
  $$G_i(-d^*)=G_i(-e) \cong R(-(i+e))^{\binom{c}{i}}.$$ 
Now we apply Proposition \ref{resq} to obtain a graded resolution of $A_Q.$
Since $I_Z$ is generated in degrees less than $e,$ we have that $A_{Q,i}=A_{Z,i},$ for $i \le e-1$ and $A_{Q,i}=0$ for $i \ge e,$
so such a resolution is minimal.
\end{proof}

If we set $H_i=G_{i}(-d^*),$ for $1 \le i \le c-1$ we have that 
  $$H_i=G_{i}(-d^*)\cong G_{c-i}^{\vee}(-\vartheta_G-d^*)\cong $$ 
	$$\cong (H_{c-i}(d^*))^{\vee}(-\vartheta_G-d^*)\cong H_{c-i}^{\vee}(-\vartheta_G-2d^*)\cong H_{c-i}^{\vee}(-d).$$
So the resolution of Proposition \ref{resq} becomes
 \begin{multline*}
  0 \rh H_{c-1} \rh H_{c-2}\oplus K_{c-1} \rh \cdots \\
	\cdots \rh H_{c-2}^{\vee}(-d)\oplus K_3 \rh H_{c-1}^{\vee}(-d)\oplus K_2 \rh R(-d^*)\oplus K_{1} \rh R \rh A_Q \rh 0.
 \end{multline*}

\begin{thm} \label{fc}
Let $A_Q=R/I_Q$ be an almost Gorenstein algebra with graded minimal free resolution
   $$0 \rh F_c \rh F_{c-1} \rh \ldots \rh F_2 \rh F_1 \rh R \rh A_Q \rh 0.$$
Let us suppose that a minimal set of generators of $I_Q$ is $(f_1,\ldots,f_n,f)$ with $(f_1,\ldots,f_n)$ minimal set of generators of a  Gorenstein ideal $I_Z.$ 
Let $K_{\bu}$ a graded minimal free resolution of $I_Z$ and $G_{\bu}$ a graded minimal free resolution of $I_G=I_Z:I_Q.$ 
Then 
   $$F_2 \cong F_c^{\vee}(-d)\oplus F_2' \cong G_1(-d^*)\oplus K_2'$$
for some $F_2'$ and where $K_2'$ is a direct summand of $K_2.$
\end{thm}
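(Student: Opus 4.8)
The plan is to read off $F_\bu$ from the (possibly non-minimal) resolution of Proposition \ref{resq} by a careful minimalization, exploiting that all of its non-minimal entries are concentrated in one family of blocks. That resolution is the mapping cone of a comparison map $\phi$ lifting the inclusion $A_G^{\vee}(-\vartheta_Z)\rh A_Z$ to minimal resolutions; writing $H_i=G_i(-d^*)$ as before, its term in homological position $i$ is $H_{i-1}\op K_i$ for $1\le i\le c-1$ and is $H_{c-1}$ in position $c$ (the summand $K_c\cong R(-\vartheta_Z)$ having already been cancelled). Its differential is block upper triangular, with the minimal differentials of $K_\bu$ and of $G_\bu(-d^*)$ on the diagonal and the comparison maps $\phi_{i-1}\colon H_{i-1}\rh K_{i-1}$ off the diagonal. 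Hence every entry outside the $\phi$-blocks lies in the maximal ideal, and in the minimalization every cancellation pairs a summand of $H_{i-1}$ in position $i$ with a summand of $K_{i-1}$ in position $i-1$ through a unit of $\phi_{i-1}$.

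First I would establish the decomposition $F_2\cong G_1(-d^*)\op K_2'$. By hypothesis $(f_1,\ldots,f_n,f)$ is a minimal system of generators of $I_Q$, so the presentation $H_0\op K_1\rh R$ is already minimal and $F_1\cong R(-d^*)\op K_1$; in particular no summand of $K_1$ is deleted. The only block of the differential $H_1\op K_2\rh H_0\op K_1$ that can contain a unit is $\phi_1\colon H_1\rh K_1$, and such a unit would cancel a summand of $K_1$; therefore $\phi_1$ has all entries in the maximal ideal. Consequently no summand of $H_1$ is removed against position $1$, and since the maps into $H_1$ from position $3$ come only from the minimal differential $H_2\rh H_1$ of $G_\bu(-d^*)$, no summand of $H_1$ is removed against position $3$ either. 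Thus $H_1=G_1(-d^*)$ survives intact, while $K_2$ is reduced only by cancelling units of $\phi_2$ against $H_2$; what remains is a direct summand $K_2'$ of $K_2$, giving $F_2\cong G_1(-d^*)\op K_2'$.

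For the duality statement I would analyze $F_c$. The top term $H_{c-1}$ is affected only by cancellations pairing its summands with summands of $K_{c-1}$ through units of $\phi_{c-1}$, so $F_c$ is a direct summand of $H_{c-1}$. Applying the exact functor $(-)^{\vee}(-d)$ and using the identity $H_{c-1}\cong H_1^{\vee}(-d)$ established just before the statement, I obtain that $F_c^{\vee}(-d)$ is a direct summand of $H_{c-1}^{\vee}(-d)\cong H_1=G_1(-d^*)$. Since $G_1(-d^*)$ is a direct summand of $F_2$ by the previous paragraph, transitivity of direct summands of graded free modules shows that $F_c^{\vee}(-d)$ is a direct summand of $F_2$, that is $F_2\cong F_c^{\vee}(-d)\op F_2'$ for some free $F_2'$. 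Together with the previous decomposition this yields both isomorphisms.

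The \emph{main obstacle} is the rigorous control of the minimalization. A priori, cancelling a unit of some $\phi_i$ modifies the surviving differential by a Schur-complement correction, so one must verify that no new unit is created in the blocks entering or leaving $H_1$ (and, dually, $H_{c-1}$). This is where I would spend the most care: each such correction is a product in which at least one factor is an entry of a minimal differential of $K_\bu$ or $G_\bu(-d^*)$, hence stays in the maximal ideal, so that the survival of $H_1$ and the summand description of $F_c$ are preserved throughout. Once this is settled, the degree bookkeeping supplied by $\vartheta_G=\vartheta_Z-d^*$ and $d=\vartheta_G+2d^*$ that matches the twists is routine.
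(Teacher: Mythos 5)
Your proof is correct and follows essentially the same route as the paper's: both read off from the mapping-cone resolution of Proposition \ref{resq} that $F_2\cong G_1(-d^*)\oplus K_2'$ (using the hypothesis on the minimal generators) and that $F_c$ is a direct summand of $H_{c-1}=G_{c-1}(-d^*)$, and then conclude via the self-duality $H_1\cong H_{c-1}^{\vee}(-d)$. The only difference is one of detail: the paper merely asserts these two summand decompositions, while you justify them by the Gaussian-elimination/Schur-complement analysis of the minimalization, which fills in exactly the step the paper leaves implicit.
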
 
\begin{proof}
By Proposition \ref{resq},
we have that $F_c$ is a direct summand of $H_{c-1}=G_{c-1}(-d^*),$ namely $H_{c-1}\cong F_c\oplus H_{c-1}'.$
By the hypotheses about the sets of generators $F_1\cong R(-d^*)\oplus K_{1}$ and 
$F_2\cong G_1(-d^*)\oplus K_{2}'=H_1\oplus K_{2}'$ where $K_2'$ is a direct summand of $K_2.$ Therefore
  $$F_2 \cong H_1\oplus K_{2}'\cong H_{c-1}^{\vee}(-d)\oplus K_{2}'\cong F_c^{\vee}(-d)\oplus H_{c-1}'^{\vee}(-d)\oplus K_{2}'.$$
\end{proof}

\begin{cor} 
Under the same hypotheses of Theorem \ref{fc}, 
  $$F_2 \cong F_c^{\vee}(-d)\oplus G_1'(-d^*)\oplus K_{2}',$$
where $G_1'$ is a direct summand of $G_1$ and $K_{2}'$ is a direct summand of $K_2.$
\end{cor}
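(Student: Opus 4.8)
The plan is to refine the decomposition already obtained in the course of proving Theorem \ref{fc}. That proof produces
$$F_2 \cong F_c^{\vee}(-d)\oplus H_{c-1}'^{\vee}(-d)\oplus K_{2}',$$
where $H_{c-1}=G_{c-1}(-d^*)\cong F_c\oplus H_{c-1}'$ and $K_2'$ is a direct summand of $K_2$. Comparing this with the statement to be proved, the only thing left is to recognise the middle summand $H_{c-1}'^{\vee}(-d)$ as $G_1'(-d^*)$ for a suitable direct summand $G_1'$ of $G_1$. Thus the corollary is essentially a translation of that intermediate factor into the language of the resolution $G_\bullet$ of $A_G$, and no new homological input is needed beyond what Theorem \ref{fc} already supplies.

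First I would set $G_1':=H_{c-1}'^{\vee}(-\vartheta_Z)$ and check, using $d=\vartheta_Z+d^*$ from Remark \ref{remd}, that $G_1'(-d^*)=H_{c-1}'^{\vee}(-\vartheta_Z-d^*)=H_{c-1}'^{\vee}(-d)$, which accounts for the middle summand exactly. It then remains to verify that this $G_1'$ is indeed a direct summand of $G_1$.

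The key step is this last verification, for which I would invoke the self-duality of the Gorenstein resolution $G_\bullet\cong G_\bullet^{\vee}(-\vartheta_G)$ established in the proof of Proposition \ref{resq}, so that $G_1\cong G_{c-1}^{\vee}(-\vartheta_G)$. Since $G_{c-1}=H_{c-1}(d^*)\cong F_c(d^*)\oplus H_{c-1}'(d^*)$, dualising and shifting gives
$$G_1\cong F_c^{\vee}(-d^*-\vartheta_G)\oplus H_{c-1}'^{\vee}(-d^*-\vartheta_G).$$
Because $\vartheta_G=\vartheta_Z-d^*$ we have $d^*+\vartheta_G=\vartheta_Z$, hence $G_1\cong F_c^{\vee}(-\vartheta_Z)\oplus G_1'$, exhibiting $G_1'$ as a direct summand of $G_1$. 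Substituting back into the displayed decomposition of $F_2$ then yields the claimed form.

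The main obstacle is purely bookkeeping: one must keep the three degree parameters $\vartheta_Z,\vartheta_G,d^*$ and their relation $d=\vartheta_Z+d^*=\vartheta_G+2d^*$ straight while commuting the duality functor $(-)^{\vee}$ past the grading shifts, taking care that $(M(a))^{\vee}\cong M^{\vee}(-a)$. There is no deeper difficulty here, since both the fine decomposition of $F_2$ and the self-duality of $G_\bullet$ are already in hand; the corollary merely records which summand of $F_2$ comes from $G_1$ and which comes from $K_2$.
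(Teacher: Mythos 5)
Your proposal is correct and follows essentially the same route as the paper: the paper's proof of this corollary simply points to the final display in the proof of Theorem \ref{fc}, namely $F_2 \cong F_c^{\vee}(-d)\oplus H_{c-1}'^{\vee}(-d)\oplus K_{2}'$, and your argument just makes explicit the implicit identification $H_{c-1}'^{\vee}(-d)\cong G_1'(-d^*)$ with $G_1'=H_{c-1}'^{\vee}(-\vartheta_Z)$ a direct summand of $G_1$, using the same self-duality $G_{\bu}\cong G_{\bu}^{\vee}(-\vartheta_G)$ and the relations $d=\vartheta_Z+d^*$, $\vartheta_Z=\vartheta_G+d^*$ already recorded in Remark \ref{remd} and in the display preceding Theorem \ref{fc}. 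The bookkeeping with $(M(a))^{\vee}\cong M^{\vee}(-a)$ is carried out correctly, so there is no gap.
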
 
\begin{proof}
This follows immediately by the conclusion of the proof of Theorem \ref{fc}.
\end{proof}

\begin{cor}
Let $A_Q=R/I_Q$ be an almost complete intersection algebra with graded minimal free resolution
   $$0 \rh F_c \rh F_{c-1} \rh \ldots \rh F_2 \rh F_1 \rh R \rh A_Q \rh 0.$$
Then $F_2 \cong F_c^{\vee}(-d)\oplus F_2',$ where $d$ is the sum of the degrees of a minimal set of generators of $I_Q.$
 \end{cor}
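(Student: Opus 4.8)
The plan is to deduce the statement directly from Theorem~\ref{fc}, exploiting the fact that an almost complete intersection algebra is precisely the special case of an almost Gorenstein algebra in which the Gorenstein ideal $I_Z$ is a complete intersection. First I would note that this places us within the hypotheses of Theorem~\ref{fc}: writing $I_Q=I_Z+(f)$ with $I_Z$ a complete intersection of codimension $c$ and minimal generators $f_1,\ldots,f_c$, a minimal set of generators of $I_Q$ is $(f_1,\ldots,f_c,f)$, because $I_Q$ is minimally generated by $c+1$ elements and $f\notin I_Z$. Theorem~\ref{fc} then yields the splitting $F_2\cong F_c^{\vee}(-d)\oplus F_2'$ with $d$ as in Remark~\ref{remd}.

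The remaining point is to identify $d$ concretely. By Remark~\ref{remd}, $d=\vartheta_Z+d^*$ with $d^*=\deg f$. Since $I_Z$ is a complete intersection generated by a regular sequence of degrees $a_1,\ldots,a_c$, its graded minimal free resolution is the Koszul complex, whose last term is $R\!\left(-\sum_{i=1}^c a_i\right)$; hence $\vartheta_Z=\sum_{i=1}^c a_i$. Adding the degree of $f$, the sum of the degrees of the minimal generators $(f_1,\ldots,f_c,f)$ of $I_Q$ equals $\sum_{i=1}^c a_i+d^*=\vartheta_Z+d^*=d$. Substituting this value of $d$ into the splitting furnished by Theorem~\ref{fc} gives exactly the assertion.

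I do not expect any genuine obstacle here: all the substantive work is already carried out in Theorem~\ref{fc}, whose proof rests on the mapping cone construction and the self-duality $G_{\bu}\cong G_{\bu}^{\vee}(-\vartheta_G)$ of the Gorenstein resolution $G_{\bu}$. The corollary is thus essentially a matter of bookkeeping, the only real content being the observation that the duality shift $d$, defined intrinsically through the last syzygy degrees $\vartheta_Z$ and $\vartheta_G$, acquires the transparent meaning of the total degree of a minimal generating set of $I_Q$ once $I_Z$ is specialized to a complete intersection.
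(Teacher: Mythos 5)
Your proposal is correct and follows essentially the same route as the paper: specialize Theorem \ref{fc} to the case where $I_Z$ is a complete intersection generated by $c$ of the $c+1$ minimal generators, then use Remark \ref{remd} to rewrite $d=\vartheta_Z+\deg f=\sum_{i=1}^c\deg f_i+\deg f$. The only difference is cosmetic: you make explicit the Koszul-complex justification of $\vartheta_Z=\sum_{i=1}^c\deg f_i$, which the paper leaves implicit.
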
 
\begin{proof}
By the hypotheses $I_Q$ is minimally generated by $(f_1,\ldots,f_c,f),$ where $(f_1,\ldots,f_c)$ is a regular sequence. 
We set $I_Z=(f_1,\ldots,f_c).$ Now, by applying Theorem \ref{fc}, we get $F_2 \cong F_c^{\vee}(-d)\oplus F_2'.$ 
By Remark \ref{remd}, $d=\vartheta_Z+\deg f,$ hence $d=\sum_{i=1}^c\deg f_i+\deg f.$
\end{proof}

This Corollary generalizes Lemma 2.1 in \cite{RZ1}.

\section{Almost complete intersections in codimension $3$}
\markboth{\it Almost complete intersections in codimension $3$}{\it Almost complete intersections in codimension $3$}

In this section we study some properties of the resolutions of the almost complete intersection algebras in codimension $3.$ 
\par
An almost complete intersection algebra $A_Q=R/I_Q$ of codimension $3$ has a graded minimal free resolution of the type
  $$0 \rh F_3 \rh F_2 \rh F_1 \rh R \rh A_Q \rh 0.$$
We set $t_Q=\rank F_3.$ Of course $t_Q\ge 2.$

\begin{thm} \label{str3}
Let $A_Q=R/I_Q$ be an almost complete intersection algebra in codimension $3.$ 
\par
If $t=t_Q$ is even, then a graded minimal free resolution of $A_Q$ is
 \begin{multline*}
  0 \rh \bigoplus_{i=1}^{t}R(-(d-s_i)) \rh \bigoplus_{i=1}^{3}R(-(d_i+d^*))\oplus\bigoplus_{i=1}^{t}R(-s_i) \rh \\
	\rh R(-d^*)\oplus\bigoplus_{i=1}^{3}R(-d_i) \rh R \rh A_Q \rh 0
 \end{multline*}
where $d=d_1+d_2+d_3+d^*,$ $d_1\le d_2\le d_3,$ $s_1\le\ldots\le s_t.$
\par
If $t=t_Q$ is odd, then a graded minimal free resolution of $A_Q$ is
 \begin{multline*}
  0 \rh \bigoplus_{i=1}^{t}R(-(d-s_i)) \rh \bigoplus_{1\le i<j\le 3}R(-(d_i+d_j))\oplus\bigoplus_{i=1}^{t}R(-s_i) \rh \\
	\rh R(-d^*)\oplus\bigoplus_{i=1}^{3}R(-d_i) \rh R \rh A_Q \rh 0
 \end{multline*}
where $d=d_1+d_2+d_3+d^*,$ $d_1\le d_2\le d_3,$ $s_1\le\ldots\le s_t.$
\end{thm}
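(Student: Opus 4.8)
The plan is to minimize the resolution of Proposition \ref{resq} in the case $c=3$. Since $I_Z=(f_1,f_2,f_3)$ is a complete intersection, $K_\bullet$ is its Koszul complex, so $K_1=\bigoplus_{i=1}^3R(-d_i)$, $K_2=\bigoplus_{1\le i<j\le3}R(-(d_i+d_j))$ and $K_3=R(-(d_1+d_2+d_3))$; moreover $\vartheta_Z=d_1+d_2+d_3$, so $d=d_1+d_2+d_3+d^*$ by Remark \ref{remd}. Proposition \ref{resq} then yields
$$0\rh G_2(-d^*)\rh G_1(-d^*)\oplus K_2\rh R(-d^*)\oplus K_1\rh R\rh A_Q\rh 0.$$
First I would note that $F_1=R(-d^*)\oplus K_1$ is already minimal: it has rank $4$ and records precisely the four minimal generators $f,f_1,f_2,f_3$ of the almost complete intersection $I_Q$. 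Hence the component $G_1(-d^*)\rh K_1$ of the differential carries no unit, and every further cancellation is confined to the tail map $G_2(-d^*)\rh G_1(-d^*)\oplus K_2$.

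Next I would invoke the self-duality $G_2\cong G_1^\vee(-\vartheta_G)$ of the Buchsbaum--Eisenbud resolution of $I_G$: writing $s_i=a_i+d^*$ for the shifted degrees $a_i$ of the generators of $I_G$, both $G_1(-d^*)$ and $G_2(-d^*)=\bigoplus_iR(-(d-s_i))$ carry the same degree multiset. By the proof of Theorem \ref{fc}, $F_3=F_c$ is a direct summand of $G_2(-d^*)$, so after reindexing $F_3=\bigoplus_{i=1}^tR(-(d-s_i))$ and $F_3^\vee(-d)=\bigoplus_{i=1}^tR(-s_i)$. Theorem \ref{fc} and its Corollary give $F_2\cong F_3^\vee(-d)\oplus F_2'$ with $F_2'\cong G_1'(-d^*)\oplus K_2'$ for summands $G_1'\subseteq G_1$ and $K_2'\subseteq K_2$; since $A_Q$ is artinian, the rank count $\rank F_2=\rank F_1+\rank F_3-1=3+t$ forces $\rank F_2'=3$. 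Thus the entire problem reduces to identifying this rank-$3$ module $F_2'$.

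The decisive step is a parity argument. Both $K_2=\bigoplus_{i<j}R(-(d_i+d_j))$, the Koszul syzygies among $f_1,f_2,f_3$, and $\bigoplus_{i=1}^3R(-(d_i+d^*))$, the Koszul syzygies pairing $f$ with each $f_i$, are genuine second syzygies of $I_Q$; the issue is which three remain minimal modulo the self-dual summand $\bigoplus R(-s_i)$. I would exploit the fact that the tail map $\partial_3\colon F_3\rh F_2$, composed with the projection onto $F_3^\vee(-d)$, is a skew-symmetric form on $F_3$ inherited from the self-duality of $G_\bullet$. A skew-symmetric form of odd rank is degenerate, while one of even rank may be non-degenerate, and I expect precisely this dichotomy to select $F_2'$: when $t$ is odd the redundant Koszul triple is $\bigoplus R(-(d_i+d^*))$, leaving $F_2'\cong K_2$, whereas when $t$ is even the redundant triple is $K_2$, leaving $F_2'\cong\bigoplus_{i=1}^3R(-(d_i+d^*))$. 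The no-cancellation situation $t=\rank G_2$ (always odd, with $F_2'=K_2$) fixes the normalization.

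The hard part will be making this correspondence rigorous, namely proving the all-or-nothing behaviour (either all three Koszul syzygies among the $f_i$ survive, or none do) together with the exact degrees $d_i+d^*$ of $F_2'$ in the even case. I anticipate this requires a careful analysis of the unit entries of the comparison map $G_2(-d^*)\rh K_2$ against the Pfaffian normal form of the skew-symmetric Buchsbaum--Eisenbud matrix, using the minimality established in the first step to rule out spurious cancellations. The degree sequence $\delta$ of $I_G$, together with its minimal complete intersection $\min(\delta)$ from Theorem \ref{c3b}, should govern which of the two Koszul triples is forced to cancel and thereby encode the parity of $t$.
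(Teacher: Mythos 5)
Your reductions are correct and coincide with the paper's route: starting from Proposition \ref{resq} with $K_\bullet$ the Koszul complex of $(f_1,f_2,f_3)$, observing that $F_1=R(-d^*)\oplus K_1$ is already minimal (so all cancellation is confined to the map $G_2(-d^*)\rh G_1(-d^*)\oplus K_2$), invoking Theorem \ref{fc} to get $F_2\cong F_3^{\vee}(-d)\oplus F_2'$ with $F_2'\cong G_1'(-d^*)\oplus K_2'$, and using the rank count to get $\rank F_2'=3$. (The paper itself then disposes of the remaining identification by citing Lemma 2.1 of \cite{RZ1}.) But your ``decisive step'' is both unexecuted --- you say yourself that the all-or-nothing behaviour still has to be proved --- and, more seriously, aimed at a statement that is false. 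It is not true that, with respect to the \emph{fixed} splitting $(d_1,d_2,d_3;d^*)=(\deg f_1,\deg f_2,\deg f_3;\deg f)$, either all three Koszul syzygies of $(f_1,f_2,f_3)$ survive or none do. The paper's own Example \ref{mont2} is a counterexample: for $I_Q=(x^{a_1},y^{a_2},z^{a_3},x^{b_1}y^{b_2})$ one has $t=2$ (even), yet exactly two of the three Koszul syzygies of the regular sequence $(x^{a_1},y^{a_2},z^{a_3})$ (those of degrees $a_1+a_3$ and $a_2+a_3$) remain minimal while the third cancels, and $F_2'$ has degrees $a_1+a_3,\ a_2+a_3,\ b_1+b_2+a_3$, which is neither your triple $K_2$ nor your triple $\bigoplus_i R(-(d_i+\deg f))$. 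The form asserted in Theorem \ref{str3} is recovered only after re-choosing which of the four generator degrees plays the role of $d^*$: in that example $d^*=a_3$, not $\deg f=b_1+b_2$, exactly as the paper points out. Since your argument nails the labelling down at the start, no refinement of the skew-form analysis can close this gap.

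The mechanism that actually produces the parity is different from the degeneracy of a skew form on $F_3$. Cancellations in the cone pair summands of $G_2(-d^*)$ with summands of $K_2$, so their number $x$ satisfies $0\le x\le 3$ and $t=\mu(I_G)-x$; by Buchsbaum--Eisenbud $\mu(I_G)$ is odd, hence $t$ is even iff $x$ is odd (this is the only place skew-symmetry enters). By the self-duality $G_2(-d^*)\cong\bigl(G_1(-d^*)\bigr)^{\vee}(-d)$, a cancelled summand of $G_2(-d^*)$ of degree $d_i+d_j$ corresponds to a summand of $G_1(-d^*)$ of degree $d_l+d^*$ (with $\{i,j,l\}=\{1,2,3\}$) lying outside $F_3^{\vee}(-d)$; therefore $F_2'$ consists of the $3-x$ uncancelled Koszul degrees together with these $x$ complementary degrees. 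A direct check of the cases $x\in\{0,1,2,3\}$ (using that any two of the three index pairs share an index) shows that $F_2'$ is always ``all pairs from a three-element subset of $\{d_1,d_2,d_3,d^*\}$'' when $x$ is even, and ``all pairs containing one fixed element of $\{d_1,d_2,d_3,d^*\}$'' when $x$ is odd --- which is the statement of Theorem \ref{str3} after renaming the distinguished degree. This relabelling step, absent from your proposal, is precisely what your plan is missing.
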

\begin{proof}
Just using Theorem \ref{fc} and Lemma 2.1 in \cite{RZ1}.
\end{proof}

With the notation of Theorem \ref{str3}, we set $s_Q=\sum_{i=1}^t s_i.$

\begin{prp} \label{sq}
We have that
\begin{itemize}
	\item[1)] $s_Q=(\frac{t}{2}-1)d^*+\frac{t}{2}(d_1+d_2+d_3)=d\frac{t}{2}-d^*,$ if $t=t_Q$ is even;
	\item[2)] $s_Q=\frac{t+1}{2}d^*+\frac{t-1}{2}(d_1+d_2+d_3)=d\frac{t-1}{2}+d^*,$ if $t=t_Q$ is odd.
\end{itemize}
\end{prp}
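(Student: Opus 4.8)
The plan is to exploit the fact that, since $A_Q$ is artinian, its Hilbert series is a polynomial, so the numerator of the Hilbert series read off the graded minimal free resolution is divisible by $(1-t)^3.$ Writing
$$P(t)=1-\sum_j t^{b_{1j}}+\sum_j t^{b_{2j}}-\sum_j t^{b_{3j}},$$
where the $b_{ij}$ are the degrees of the free summands of $F_i$ given by Theorem \ref{str3}, divisibility by $(1-t)^3$ forces in particular $P'(1)=0.$ This is the first--moment identity
$$\sum_{i=1}^{3}(-1)^i\,\sigma(F_i)=0,\qquad \sigma(F_i):=\sum_j b_{ij},$$
that is, $\sigma(F_2)=\sigma(F_1)+\sigma(F_3).$

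First I would compute the three quantities $\sigma(F_i)$ from the explicit resolution. In both parities $\sigma(F_1)=d^*+d_1+d_2+d_3=d$ and $\sigma(F_3)=\sum_{i=1}^t(d-s_i)=td-s_Q.$ When $t$ is even, $\sigma(F_2)=\sum_{i=1}^3(d_i+d^*)+\sum_{i=1}^t s_i=(d_1+d_2+d_3)+3d^*+s_Q;$ when $t$ is odd, the summand $\bigoplus_{1\le i<j\le 3}R(-(d_i+d_j))$ contributes $\sum_{1\le i<j\le 3}(d_i+d_j)=2(d_1+d_2+d_3),$ so $\sigma(F_2)=2(d_1+d_2+d_3)+s_Q.$

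Then I would substitute into $\sigma(F_2)=\sigma(F_1)+\sigma(F_3)$ and solve for $s_Q.$ In the even case this reads $(d_1+d_2+d_3)+3d^*+s_Q=d+td-s_Q,$ whence $2s_Q=td-2d^*$ and $s_Q=d\frac{t}{2}-d^*;$ using $d_1+d_2+d_3=d-d^*$ one rewrites this as $(\frac{t}{2}-1)d^*+\frac{t}{2}(d_1+d_2+d_3).$ In the odd case one gets $2(d_1+d_2+d_3)+s_Q=d+td-s_Q,$ so $2s_Q=(t-1)d+2d^*$ and $s_Q=d\frac{t-1}{2}+d^*=\frac{t+1}{2}d^*+\frac{t-1}{2}(d_1+d_2+d_3).$

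The computation is routine; the only point needing justification is the first--moment identity $P'(1)=0,$ which holds because $A_Q$ has finite length and $R$ has $c=3\ge 2$ variables, so $(1-t)^2$ (indeed $(1-t)^3$) divides $P(t).$ I expect no genuine obstacle here: once the divisibility relation is invoked, everything reduces to substituting the twists from Theorem \ref{str3} and collecting terms via $d_1+d_2+d_3=d-d^*.$
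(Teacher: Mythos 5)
Your proof is correct and is essentially the paper's own argument: the paper's one-line proof invokes exactly your first--moment identity (``the alternating sum of the shifts of a graded minimal free resolution is equal to zero'') applied to the resolution of Theorem \ref{str3}. You merely add the standard justification of that identity via divisibility of the Hilbert series numerator by $(1-t)^3$, and your bookkeeping in both parities matches the stated formulas.
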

\begin{proof}
This follows from Theorem \ref{str3}, since the alternating sum of the shifts of a graded minimal free resolution 
is equal to zero.
\end{proof}

Let $A_Q$ be an almost complete intersection algebra with minimal graded free resolution $F_{\bu}.$ 
If $F_2=\bigoplus_{i=1}^{t+3}R(-\beta_i),$ we set $s_{(2)}=\sum_{i=1}^{t+3}\beta_i.$ 
Analogously if $F_3=\bigoplus_{i=1}^{t}R(-\gamma_i),$ we set $s_{(3)}=\sum_{i=1}^{t+3}\gamma_i.$
Note that $s_Q=td-s_{(3)}.$ We set
 $$u_Q=s_{(2)}-s_Q=s_{(2)}+s_{(3)}-td.$$
We can compute the fundamental numerical invariant $d^*$ directly from the graded minimal free resolution of $A_Q.$

\begin{prp}
Let $A_Q=R/I_Q$ be an almost complete intersection algebra with minimal graded free resolution $F_{\bu}.$
Let $u=u_Q$ and $d$ be the sum of the degrees of the minimal generators of $I_Q.$ Then
  $$d^*:=\frac{u-d}{2},\text{ if $t$ is even;} \,\,\,\,d^*=\frac{2d-u}{2},\text{ if $t$ is odd.}$$
\end{prp}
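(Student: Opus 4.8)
The plan is to compute $u=u_Q=s_{(2)}-s_Q$ directly from the explicit shape of $F_2$ provided by Theorem \ref{str3}, and to exploit the fact that the defining difference $s_{(2)}-s_Q$ cancels precisely the summand $\bigoplus_{i=1}^t R(-s_i)$, isolating the remaining ``Koszul-type'' shifts. The whole argument then reduces to a one-line bookkeeping in each parity case, closed by the relation $d=d_1+d_2+d_3+d^*$ of Theorem \ref{str3}.

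First I would treat the even case. Reading off $F_2$ from Theorem \ref{str3}, the shifts are $d_i+d^*$ for $1\le i\le 3$ together with the $s_i$ for $1\le i\le t$, so
$$s_{(2)}=\sum_{i=1}^3(d_i+d^*)+\sum_{i=1}^t s_i=(d_1+d_2+d_3)+3d^*+s_Q.$$
Hence $u=s_{(2)}-s_Q=(d_1+d_2+d_3)+3d^*$. Substituting $d_1+d_2+d_3=d-d^*$ yields $u=d+2d^*$, whence $d^*=\frac{u-d}{2}$.

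Next I would treat the odd case in the same spirit. Now the shifts of $F_2$ are $d_i+d_j$ for $1\le i<j\le 3$ together with the $s_i$, and since $\sum_{1\le i<j\le 3}(d_i+d_j)=2(d_1+d_2+d_3)$ we obtain
$$s_{(2)}=2(d_1+d_2+d_3)+s_Q,$$
so that $u=s_{(2)}-s_Q=2(d_1+d_2+d_3)=2(d-d^*)=2d-2d^*$, giving $d^*=\frac{2d-u}{2}$.

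There is essentially no obstacle here: the only point requiring care is correctly reading which shifts occur in $F_2$ in each parity (the three shifts $d_i+d^*$ versus the three shifts $d_i+d_j$), after which the $s_Q$ terms cancel by the very construction of $u_Q$ and the relation $d=d_1+d_2+d_3+d^*$ finishes both cases. I note in particular that Proposition \ref{sq} is not needed for this computation, since forming $u_Q=s_{(2)}-s_Q$ already removes all dependence on the individual $s_i$.
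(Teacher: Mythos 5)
Your proof is correct and follows essentially the same route as the paper: both read off the shape of $F_2$ from Theorem \ref{str3}, observe that the summand $\bigoplus_{i=1}^t R(-s_i)$ cancels against $s_Q$ in forming $u_Q$, and then use $d=d_1+d_2+d_3+d^*$ to conclude $u=d+2d^*$ (even case) or $u=2(d-d^*)$ (odd case). Your remark that Proposition \ref{sq} is not needed is also consistent with the paper, whose proof likewise does not invoke it.
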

\begin{proof}
If $t$ is even then 
  $$F_2 \cong \bigoplus_{i=1}^{3}R(-(d_i+d^*))\oplus\bigoplus_{i=1}^{t}R(-s_i),$$
hence $u=u_Q=\sum_{i=1}^3(d_i+d^*)=d+2d^*.$
\par
If $t$ is odd then 
  $$F_2 \cong \bigoplus_{1\le i<j\le 3}R(-(d_i+d_j))\oplus\bigoplus_{i=1}^{t}R(-s_i),$$
hence $u=u_Q=\sum_{1\le i<j\le 3}(d_i+d_j)=2(d_1+d_2+d_3)=2(d-d^*).$
\end{proof}

\begin{rem}
By Theorem \ref{fc}, $F_2 \cong F_3^{\vee}(-d) \oplus F_2'.$ By Theorem \ref{str3} 
 $$F_2'\cong \bigoplus_{i=1}^{3}R(-(d_i+d^*)) \,\, \text{if $t_Q$ is even};$$
 $$F_2'\cong \bigoplus_{1\le i<j\le 3}R(-(d_i+d_j)) \,\, \text{if $t_Q$ is odd}.$$
\end{rem}

\begin{exm}
Let $A_Q$ be an almost complete intersection algebra with minimal graded free resolution
 \begin{multline*}
 0 \rh R(-20)^3 \oplus R(-21)^4 \oplus R(-22)^4 \rh \\ \rh R(-17) \oplus R(-18) \oplus R(-19)^5 \oplus R(-20)^4 \oplus R(-21)^3 \rh \\
\rh R(-8)\oplus R(-9)\oplus R(-10)\oplus R(-14) \rh R \rh A_Q \rh 0.
 \end{multline*}
We have that $t=11,$ $d=8+9+10+14=41$ and $d-20=21,d-21=20,d-22=19,$ so $u=17+18+19=54.$
Consequently
 $$d^*=\frac{2d-u}{2}=14 \rw d_1=8,d_2=9,d_3=10.$$
Moreover
 $$s_1=\ldots=s_4=19,\,s_5=\ldots=s_8=20,\,s_9=s_{10}=s_{11}=21.$$
\end{exm}

Now we compute $d^*$ for the monomial ideals.

\begin{exm} \label{mont2}
Let $I_Q=(x^{a_1},y^{a_2},z^{a_3},x^{b_1}y^{b_2}) \subset k[x,y,z],$ with $0<b_1<a_1,$ $0<b_2<a_2,$ $a_3>0.$
\par
Its graded minimal free resolution is 
 \begin{multline*}
   0 \rh R(-(a_1+b_2+a_3))\oplus R(-(b_1+a_2+a_3)) \rh R(-(b_1+a_2))\oplus R(-(a_1+b_2)) \oplus \\
	 \oplus R(-(a_1+a_3)) \oplus R(-(a_2+a_3)) \oplus R(-(b_1+b_2+a_3)) \rh \\
	 \rh R(-a_1)\oplus R(-a_2) \oplus R(-a_3) \oplus R(-(b_1+b_2)) \rh I_Q \rh 0 .
 \end{multline*}
So we have that
  $$F_2' \cong R(-(a_1+a_3)) \oplus R(-(a_2+a_3)) \oplus R(-(b_1+b_2+a_3)).$$
Consequently $d^*=a_3.$
\end{exm}

\begin{exm} \label{mont3}
Let $I_Q=(x^{a_1},y^{a_2},z^{a_3},x^{b_1}y^{b_2}z^{b_3}) \subset k[x,y,z],$ with $0<b_i<a_i,$ for $i=1,2,3.$
\par
Its graded minimal free resolution is 
 \begin{multline*}
   0 \rh R(-(a_1+a_2+b_3))\oplus R(-(a_1+b_2+a_3)) \oplus R(-(b_1+a_2+a_3)) \rh \\
	\rh R(-(b_1+b_2+a_3))\oplus R(-(b_1+a_2+b_3)) \oplus R(-(a_1+b_2+b_3)) \oplus\\
	 \oplus R(-(a_1+a_2)) \oplus R(-(a_1+a_3)) \oplus R(-(a_2+a_3)) \rh \\
	 \rh R(-a_1)\oplus R(-a_2) \oplus R(-a_3) \oplus R(-(b_1+b_2+b_3)) \rh I_Q \rh 0 .
 \end{multline*}
So we have that
  $$F_2' \cong R(-(a_1+a_2)) \oplus R(-(a_1+a_3)) \oplus R(-(a_2+a_3)).$$
Consequently $d^*=b_1+b_2+b_3.$
\end{exm}

Note that there is always a regular sequence $(f,f_2,f_3)$ in $I_Q,$ with 
 $$\deg f=d^*,\,\,\deg f_2=d_2,\,\,\deg f_3=d_3.$$
Now we study the liaison of $I_Q$ in a complete intersection of type $d^*,d_2,d_3.$

\begin{thm} \label{tpari1}
Let $A_Q=R/I_Q$ be an almost complete intersection algebra of codimension $3,$ with $t=t_Q$ even.
\par 
If $d^*\ne d_1,$ then $I_Q$ is linked to a Gorenstein ideal with $t+1$ minimal generators in a complete
intersection of type $(d^*,d_2,d_3).$
\end{thm}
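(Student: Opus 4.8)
\emph{Plan.} The plan is to realize the link concretely and then to compute a graded free resolution of the linked ideal by a mapping cone, reading off its rank at the two ends. By the remark preceding the statement there is a regular sequence $(f,f_2,f_3)\subseteq I_Q$ of degrees $(d^*,d_2,d_3)$; I set $C=(f,f_2,f_3)$, a complete intersection of codimension $3$ of type $(d^*,d_2,d_3)$, and $I_W=C:I_Q$, $A_W=R/I_W$. Since $C\subseteq I_Q$ and both ideals are perfect of codimension $3$, linkage theory gives that $I_W$ is perfect of codimension $3$ (and that $I_Q=C:I_W$), together with the exact sequence
$$0 \rh (A_W)^\vee(-\tau) \rh A_C \rh A_Q \rh 0,$$
where $\tau=d^*+d_2+d_3$ is the top degree of the Koszul complex $\mathbb K_\bu$ resolving $A_C$; this is the same device used in the proof of Proposition \ref{resq}.

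First I would lift the surjection $A_C\rh A_Q$ to a comparison map $\psi_\bu\colon \mathbb K_\bu\rh F_\bu$, with $F_\bu$ the resolution of $A_Q$ from Theorem \ref{str3} in the even case. The mapping cone of $\psi_\bu$ resolves $(A_W)^\vee(-\tau)$; dualizing and twisting yields a (possibly non-minimal) resolution of $A_W$ of shape
$$0 \rh F_1^\vee(\ast) \rh (K_1\op F_2)^\vee(\ast) \rh (K_2\op F_3)^\vee(\ast) \rh R \rh A_W \rh 0.$$
Because $I_Q$ is an almost complete intersection and $k$ is infinite, the three generators of $C$ may be taken to be part of a minimal generating set of $I_Q$, so $\psi_1\colon K_1\rh F_1$ contains a $3\times 3$ unit block matching the summands $R(-d^*),R(-d_2),R(-d_3)$ of $F_1$. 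Cancelling these three units collapses the last module $F_1^\vee$ from rank $4$ to rank $1$ (the surviving summand being the dual of $R(-d_1)$) and removes $K_1^\vee$ entirely, so $A_W$ has Cohen--Macaulay type $1$: it is Gorenstein. This recovers the classical fact that an almost complete intersection is linked, in a complete intersection, to a Gorenstein ideal.

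To pin the number of minimal generators I would exploit the self-dual structure $F_2\cong F_3^\vee(-d)\op F_2'$ with $F_2'=\bo_{i=1}^{3}R(-(d_i+d^*))$ coming from Theorem \ref{fc} and Theorem \ref{str3}. The Koszul module $K_2$ has summands $R(-(d^*+d_2)),R(-(d^*+d_3)),R(-(d_2+d_3))$, and the first two agree in degree with the summands $R(-(d_2+d^*)),R(-(d_3+d^*))$ of $F_2'$, on which $\psi_2$ is a unit. Cancelling these two further units reduces the generator module $(K_2\op F_3)^\vee$ from rank $t+3$ to rank $t+1$, the survivors being the duals of $R(-(d_2+d_3))$, of $R(-(d_1+d^*))$, and of $F_3$. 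Thus $I_W$ is minimally generated by exactly $t+1$ elements, consistently odd as required by the Buchsbaum--Eisenbud structure theorem.

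The hard part will be the minimality of the cancelled complex, and this is exactly where the hypothesis $d^*\neq d_1$ enters. I must verify that the $3\times 3$ unit block of $\psi_1$ and the two units of $\psi_2$ are the only unit entries, so that none of the claimed $t+1$ generators (nor the rank-$1$ ends) is inadvertently cancelled. When $d^*\neq d_1$ the summands $R(-d^*)$ and $R(-d_1)$ of $F_1$ are distinguishable and the surviving twists $R(-(d_1+d^*))$ and $R(-(d_2+d_3))$ are genuinely distinct from the removed ones, which rules out any accidental further coincidence of shifts; when $d^*=d_1$ the repeated generator degree can force an extra cancellation and a different count, so that case must be excluded (and treated separately). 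Carrying out this non-degeneracy check — i.e. proving that once $d^*\neq d_1$ the comparison map carries no further units after the two structural cancellations — is the main obstacle, and is where the bulk of the work lies.
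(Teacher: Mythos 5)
Your setup --- linking in a complete intersection $C=(f,f_2,f_3)$ of type $(d^*,d_2,d_3)$ and resolving the link by the dualized mapping cone --- is exactly the paper's starting point, and your first cancellation (the $3\times 3$ unit block of $\psi_1$, collapsing the last module to rank one and absorbing $K_1^\vee$) reproduces the resolution the paper displays. From there on, however, your argument rests on two claims that are neither proved nor provable by the means you indicate. First, you assert that $\psi_2$ is a unit on the summands $R(-(d_2+d^*))$ and $R(-(d_3+d^*))$ of $F_2'$, i.e.\ that the Koszul relations between $f$ and $f_2,f_3$ are \emph{minimal} syzygies of $I_Q$ hitting exactly those summands with unit coefficients. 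Equality of twists never produces a unit entry by itself, nothing in the paper identifies the abstract summands of $F_2'$ with Koszul syzygies, and the analogous claim fails in the odd case (in Example \ref{mont3} the Koszul syzygies involving the fourth generator have degrees $d_i+d^*$, yet $F_2$ has no summands in those degrees); so this step is a genuine unproved assumption, essentially equivalent to what the theorem asserts. Second, the minimality of the cancelled complex --- which you yourself call ``where the bulk of the work lies'' --- is left undone, and the route you sketch would not close it: $d^*\ne d_1$ makes the surviving twists differ from the cancelled ones, but it does not exclude coincidences of twists between summands of $F_3(d_1)$ and of $F_3^{\vee}(-\vt_Z)\op R(-d^*)$ (nothing forbids $s_i+s_j=d$ or $s_i=d_2+d_3$), and minimality is in any case a statement about unit entries of the differential, not about which twists occur.

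The idea you are missing is the one the paper actually uses, and it bypasses both difficulties: since the link $A_G$ is Gorenstein (classical for almost complete intersections, so no unit-block bookkeeping is needed for that), its \emph{minimal} free resolution is self-dual up to the twist $-\vt_G$, with $\vt_G=\vt_Z-d_1$. In the mapping-cone resolution the pairs $F_3^{\vee}(-\vt_Z)\leftrightarrow F_3(d_1)$ and $R(-d^*)\leftrightarrow R(-(d_2+d_3-d_1))$ are already dual to one another, so if that resolution were minimal, self-duality would force $R(-d_2)^{\vee}(-\vt_G)\cong R(-d_3)$, i.e.\ $\vt_G=d_2+d_3$, i.e.\ $d^*=d_1$, against the hypothesis. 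Hence the terms $R(-d_2)\op R(-d_3)$ occurring in both homological degrees $1$ and $2$ must cancel, and what remains, being self-dual, is the minimal resolution, whose first module $F_3^{\vee}(-\vt_Z)\op R(-d^*)$ has rank $t+1$; this is the whole proof. (A small bookkeeping slip as well: your list of ``survivors'' of the generator module names the duals of $R(-(d_2+d_3))$, of $R(-(d_1+d^*))$ and of $F_3$, which would have rank $t+2$; the summand $R(-(d_1+d^*))$ lies in $F_2$, so its dual survives in homological degree $2$, not $1$.)
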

\begin{proof}
We set $\vartheta_Z=d^*+d_2+d_3.$ By liaison, using mapping cone, we get the following graded resolution of a Gorenstein algebra
\begin{multline*}
 0 \rh R(-(\vartheta_Z-d_1)) \rh F_3(d_1) \oplus F_2'^{\vee}(-\vartheta_Z) \rh \\
\rh F_3^{\vee}(-\vartheta_Z) \oplus R(-d^*)\oplus R(-d_2)\oplus R(-d_3) \rh R.
\end{multline*}
We have that 
  $$F_2'^{\vee}(-\vartheta_Z) \cong \bigoplus_{i=1}^3R(-(\vartheta_Z-d_i-d^*))=R(-(d_2+d_3-d_1)) \oplus R(-d_3)\oplus R(-d_2).$$
Now we set $\vartheta_G=\vartheta_Z-d_1.$ We have that 
  $$(F_3^{\vee}(-\vartheta_Z))^{\vee}(-\vartheta_G)=F_3(d_1)$$
and 
  $$R(-d^*)^{\vee}(-\vartheta_G) \cong R(d^*-\vartheta_G) = R(-(d_2+d_3-d_1)).$$
So, if this resolution were minimal, then 
  $$R(-d_2)^{\vee}(-\vt_G)\cong R(-d_3) \iff d_2+d_3=\vt_G \iff d^*=d_1.$$
Therefore the terms $R(-d_2)$ and $R(-d_3)$ are redundant and we obtain the graded minimal free resolution
  \begin{multline*}
0 \rh R(-\vt_G) \rh F_3(d_1) \oplus R(-(d_2+d_3-d_1))\rh \\
\rh F_3^{\vee}(-\vt_Z) \oplus R(-d^*) \rh R.
  \end{multline*}
\end{proof}

\begin{exm}
Let us consider the ideal 
 $$({x}_{4}^{3}{x}_{8},{x}_{3}^{4},{x}_{4}^{3}{x}_{6}^{3},{x}_{2}^{4}{x}_{6}^{3}-{x}_{1}^{6}{x}_{8})\subset k[x_1,\ldots,x_8].$$
Its artinian reduction defines an almost complete intersection algebra whose graded minimal free resolution is
\begin{multline*}
 $$0 \rh R(-11)\op R(-14) \rh R(-7)\op R(-8)\op R(-10)^2\op R(-11) \rh \\ \rh R(-4)^2\op R(-6)\op R(-7) \rh R.$$
\end{multline*}
We see that $d^*=4.$
If we link in the complete intersection 
 $$({x}_{4}^{3}{x}_{8},{x}_{4}^{3}{x}_{6}^{3}-{x}_{3}^{4}{x}_{7}^{2},{x}_{3}^{4}{x}_{5}^{3}-{x}_{2}^{4}{x}_{6}^{3}+{x}_{1}^{6}{x}_{8})$$
we get a Gorenstein algebra whose graded minimal free resolution is
 \begin{multline*}
 $$0 \rh R(-13) \rh R(-6)\op R(-7)^2\op R(-9)\op R(-10) \rh \\ \rh R(-3)\op R(-4)\op R(-6)^2\op R(-7) \rh R,$$
\end{multline*}
so, in this case, the term $R(-6)\op R(-7)$ cannot be deleted.
\end{exm}

\begin{thm} \label{tpari2}
Let $A_Q=R/I_Q$ be an almost complete intersection algebra of codimension $3,$ with $t=t_Q$ even.
\par 
If $d^*=d_1,$ then $I_Q$ has the same graded Betti numbers of an almost complete intersection ideal $I'_Q,$ which is linked to a Gorenstein ideal with $t+1$ minimal generators in a complete intersection of type $(d^*,d_2,d_3).$
\end{thm}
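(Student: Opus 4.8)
The plan is to build $I'_Q$ by \emph{reverse liaison}: I will exhibit a genuine Gorenstein ideal $I'_G$ with exactly $t+1$ minimal generators, link it back in a complete intersection of type $(d^*,d_2,d_3)=(d_1,d_2,d_3)$, and verify that the resulting ideal is an almost complete intersection carrying precisely the graded Betti numbers of Theorem \ref{str3}. The natural starting point is the computation in the proof of Theorem \ref{tpari1}: linking $I_Q$ in the complete intersection of type $(d_1,d_2,d_3)$ yields a Gorenstein algebra with (a priori non‑minimal) resolution
$$0 \rh R(-\vt_G) \rh F_3(d_1)\op (F_2')^{\vee}(-\vt_Z) \rh F_3^{\vee}(-\vt_Z)\op R(-d^*)\op R(-d_2)\op R(-d_3) \rh R,$$
where $\vt_G=d_2+d_3$. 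Exactly because $d^*=d_1$, the pair $R(-d_2),R(-d_3)$ is self‑dual under $G_{\bu}\cong G_{\bu}^{\vee}(-\vt_G)$ and therefore is \emph{not} forced to cancel; this is precisely the reason $I_Q$ itself may link to a Gorenstein ideal with $t+3$ rather than $t+1$ generators.

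I would first record that this pair accounts for two consecutive cancellations (a copy of $R(-d_2)$ and a copy of $R(-d_3)$ sitting in adjacent homological degrees with equal shifts), so removing it leaves the Hilbert function of the Gorenstein algebra unchanged. The remaining generators have degree sequence $\delta'=(d_1,\,s_1-d_1,\ldots,s_t-d_1)$, and I would check that $\delta'$ is an admissible Gorenstein degree sequence in the sense of Section 2. Condition (1) holds because, with $n=t/2$, Proposition \ref{sq} and $d=2d_1+d_2+d_3$ give $\tfrac{1}{n}\sum_j a_j=d-2d_1=d_2+d_3=\vt_G\in\zz$. For the Gaeta inequalities (2) I would exploit the minimality of the resolution of $I_Q$ in Theorem \ref{str3}, which forces the strict inequalities among the $s_i$ and the $d_j$ needed for $\delta'$. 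Granting this, Diesel's characterization \cite{Di} (see also \cite{RZ2}) produces a Gorenstein ideal $I'_G\in\Gor(\delta')$ with exactly $t+1$ minimal generators.

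The second step is the reverse link. I need $I'_G$ to contain a regular sequence of type $(d_1,d_2,d_3)$, equivalently $(d_1,d_2,d_3)\in\CI_{\delta'}$, which I would verify by comparing this triple with the explicit minimal element $\min(\delta')$ supplied by Theorem \ref{c3b}. Taking such an $I'_G$ and a regular sequence $(g_1,g_2,g_3)$ of those degrees inside it, I set $I'_Q:=(g_1,g_2,g_3):I'_G$. Computing a graded free resolution of $I'_Q$ by the mapping cone, exactly as in Theorem \ref{tpari1} with the two ideals interchanged, I would obtain a resolution whose first free module is $R(-d_1)^2\op R(-d_2)\op R(-d_3)$ and whose last free module is $\bigoplus_{i=1}^t R(-(d-s_i))$; by Remark \ref{remd} and Proposition \ref{sq} every intermediate shift then coincides with the one prescribed by Theorem \ref{str3}. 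Thus $I'_Q$ is an almost complete intersection with the same graded Betti numbers as $I_Q$, and since direct liaison is symmetric, $I'_Q$ is linked to $I'_G$ — a Gorenstein ideal with $t+1$ generators — in the complete intersection of type $(d^*,d_2,d_3)$, which is the assertion.

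The step I expect to be the main obstacle is the verification that the reduced sequence $\delta'$ satisfies the Gaeta inequalities $\vt_G>a_i+a_{t+3-i}$ for $2\le i\le t/2$: this is the only place where the hypothesis $d^*=d_1$ enters essentially, and it amounts to translating the minimality of the resolution of $I_Q$ into the statement that the ghost pair is genuinely removable. Two secondary points need care. First, the membership $(d_1,d_2,d_3)\in\CI_{\delta'}$ must be confirmed against $\min(\delta')$, since the special Gorenstein ideal above only exhibits this triple for the larger sequence. Second, I must confirm that the reverse mapping cone is already minimal apart from the expected cancellations, so that no further collapse perturbs the Betti numbers; for a generic choice of $I'_G$ this holds because the relevant comparison maps have unit entries, a non‑empty open condition thanks to $k$ being infinite.
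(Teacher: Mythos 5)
Your overall strategy coincides with the paper's own proof: perform the liaison of Theorem \ref{tpari1} in a complete intersection of type $(d^*,d_2,d_3)$, delete the dual ghost pair $R(-d_2)\oplus R(-d_3)$, realize the reduced numerical data by an actual Gorenstein ideal containing a complete intersection of that type, and link back to produce $I'_Q$. But the two existence steps are exactly where the paper invests its citations, and it is there that your proposal has a genuine gap. You propose to verify the Gaeta inequalities for $\delta'=(d_1,s_1-d_1,\ldots,s_t-d_1)$ by ``exploiting the minimality of the resolution of $I_Q$,'' but minimality of a graded free resolution imposes no numerical constraints of this kind: a minimal resolution may perfectly well carry equal shifts in consecutive homological degrees. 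Indeed the example following Theorem \ref{tpari1} is precisely a case with $d^*=d_1$ in which the honest link of $I_Q$ keeps the pair $R(-6)\oplus R(-7)$ in its \emph{minimal} resolution; so nothing about $I_Q$ forces the cancelled sequence to be attained by the link itself, and the inequalities you need cannot be read off from minimality. Worse, in the paper's logical order these inequalities are condition 2) of Theorem \ref{strpari}, which is \emph{deduced from} Theorems \ref{tpari1} and \ref{tpari2}; establishing them independently at this stage is real work that your write-up covers only with ``granting this.''

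The paper fills this hole by citing Remark 3.8 of \cite{RZ2}: since the uncancelled Betti sequence is realized by an honest Gorenstein algebra (namely $R/(I_Z:I_Q)$, which exists because $I_Q$ contains a regular sequence of degrees $(d^*,d_2,d_3)$), that result guarantees some Gorenstein ideal whose \emph{minimal} resolution is the reduced complex $G_{\bu}$; this closure of codimension-$3$ Gorenstein Betti sequences under deletion of a dual ghost pair is the missing key lemma, and it is not a formal consequence of anything you have on the table. Two further points need tightening. The membership $(d^*,d_2,d_3)\in\CI_{\delta'}$ is exactly what Theorem \ref{c3b} (Theorem 3.6 of \cite{RZ3}) is invoked for, but you leave the comparison with $\min(\delta')$ unexecuted. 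And in the reverse mapping cone you need the complete intersection generator of degree $d^*$ to be a \emph{minimal} generator of $I'_G$ (so that the extra pair $R(-(d_2+d_3))$ appearing in adjacent spots cancels) while the generators of degrees $d_2,d_3$ are \emph{not} minimal generators of $I'_G$ (so that no further cancellation lowers the Cohen--Macaulay type below $t$). Your closing remark that ``the relevant comparison maps have unit entries for generic $I'_G$'' points partly in the wrong direction: you need a unit at one spot and non-units at the others, which is a structural property of the ideals supplied by \cite{RZ3}, not a genericity statement.
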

\begin{proof}
By performing the liaison we get the following Gorenstein resolution as in the proof of Theorem \ref{tpari1} 
    \begin{multline*}
0 \rh R(-\vt_G) \rh F_3(d_1) \op R(-(d_2+d_3-d_1)) \op R(-d_2) \op R(-d_3) \rh \\
\rh F_3^{\vee}(-\vt_Z) \oplus R(-d^*) \op R(-d_2) \op R(-d_3) \rh R.
  \end{multline*}
Let $G_{\bu}$ be the graded minimal resolution obtained by the previous one, by omitting the term $R(-d_2) \op R(-d_3).$
By Remark 3.8 in \cite{RZ2}, there exist Gorenstein ideals whose graded minimal resolution is $G_{\bu}.$ By Theorem 3.6 in \cite{RZ3}, there exist Gorenstein ideals $I_G$ whose graded minimal resolution is $G_{\bu},$ containing a complete intersection ideal $I_Z$ of type $(d^*,d_2,d_3).$ The ideal $I'_Q=I_Z:I_G$ is an almost Gorenstein ideal with the same graded Betti numbers as $I_Q.$
\end{proof}

We have that 
  $$F_3^{\vee}(-\vt_Z) \cong \bo_{i=1}^t R(d-s_i-\vt_Z) = \bo_{i=1}^t R(-(s_i-d_1)),$$
so by Theorem \ref{tpari1} and Theorem \ref{tpari2}, $(d^*,s_1-d_1,\ldots,s_t-d_1)$ is 
a sequence of the degrees of the minimal generators of a Gorenstein ideal of codimension $3.$
\par
If $(a_1,\ldots,a_n) \in \zz^n,$ we define 
 $$\ord(a_1,\ldots,a_n)=(a_{\sigma(1)},\ldots,a_{\sigma(n)})$$ 
the permutation such that $a_{\sigma(1)} \le\ldots\le a_{\sigma(n)}.$
\par
We set $(s'_1,\ldots,s'_{t+1})=\ord(d^*+d_1,s_1,\ldots,s_t).$ So
 $$(s'_1-d_1,\ldots,s'_{t+1}-d_1)$$
is the ordered sequence of the degrees of the minimal generators of a Gorenstein ideal of codimension $3.$
\par
Moreover we need to compare $d^*$ with $d_2$ and $d_3.$ So we define
 $$p(d^*)=\begin{cases} 1 & \text{ if } d^* \le d_2 \\ 2 & \text{ if } d_2 < d^* \le d_3 \\ 3 & \text{ if } d_3 < d^*\end{cases}$$

\begin{thm} \label{strpari}
If $t$ is even, let us consider a resolution of the type
 \begin{multline*}
  0 \rh \bo_{i=1}^{t}R(-(d-s_i)) \rh \bo_{i=2}^{3}R(-(d_i+d^*))\op\bo_{i=1}^{t+1}R(-s'_i) \rh \\
	\rh R(-d^*)\oplus\bigoplus_{i=1}^{3}R(-d_i) \rh R \rh A_Q \rh 0
 \end{multline*}
where $d=d_1+d_2+d_3+d^*,$ $d_1\le d_2\le d_3,$ $s_1\le\ldots\le s_{t},$
$(s'_1,\ldots,s'_{t+1})=\ord(d^*+d_1,s_1,\ldots,s_t).$ Furthermore let
 $$(m_1,m_2,m_3)=\min (s'_1-d_1,\ldots,s'_{t+1}-d_1),\,\,m_1 \le m_2 \le m_3,$$
$p=p(d^*)$ and $(e_1,e_2,e_3)=\ord(d^*,d_2,d_3).$
\par
It is the graded minimal free resolution of an almost complete intersection algebra of codimension $3$ if and only if 
  \begin{itemize}
	  \item[1)] $\sum_{i=1}^t s_i=\frac{t}{2}d-d^*;$
    \item[2)] $d>s'_i+s'_{t+3-i},\,\,\text{for}\,\,2\le i\le t+1;$
		\item[3)] $m_p \le d^*$ and $m_i<e_i$ for $1\le i \le 3,$ $i \ne p.$
	\end{itemize}
\end{thm}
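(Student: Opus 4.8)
The plan is to prove Theorem \ref{strpari} by reducing the question of realizing the given resolution as a minimal free resolution of an almost complete intersection to the already-understood problem of realizing a Gorenstein resolution, exploiting the liaison correspondence established in Theorems \ref{tpari1} and \ref{tpari2}. The key structural fact, recorded just before the statement, is that linking $I_Q$ in the complete intersection of type $(d^*,d_2,d_3)$ produces a Gorenstein ideal whose minimal generators have degrees $(s'_1-d_1,\ldots,s'_{t+1}-d_1)$, where $(s'_1,\ldots,s'_{t+1})=\ord(d^*+d_1,s_1,\ldots,s_t)$. Conversely, by Theorem \ref{tpari2} (together with Theorem 3.6 in \cite{RZ3}) any Gorenstein ideal of this degree type containing a complete intersection of type $(d^*,d_2,d_3)$ links back to an almost complete intersection with the prescribed graded Betti numbers. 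So the realizability of the resolution is equivalent to the simultaneous realizability of the Gorenstein degree sequence \emph{and} the existence inside it of the required complete intersection.

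\emph{Necessity.} First I would assume the resolution is a genuine minimal free resolution of some almost complete intersection $A_Q$ and extract the three conditions. Condition 1) is immediate from Proposition \ref{sq} (part 1, the even case): the constraint $\sum s_i=\frac{t}{2}d-d^*$ is just the statement that the alternating sum of the shifts vanishes, forced by the numerical shape of the resolution. Condition 2) is the Gaeta condition for the linked Gorenstein ideal: applying the criterion quoted in Section 2 to the degree sequence $(s'_1-d_1,\ldots,s'_{t+1}-d_1)$, and noting that subtracting the constant $d_1$ from every term shifts $\vartheta$ by $3d_1$ while the inequality $\vartheta>d_i+d_{2n+3-i}$ rearranges into $d>s'_i+s'_{t+3-i}$, gives exactly 2). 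Condition 3) encodes that this Gorenstein ideal actually contains a complete intersection of type $(d^*,d_2,d_3)$: by Theorem \ref{c3b} the set $\CI_\delta$ has a unique minimal element $\min(\delta)=(m_1,m_2,m_3)$, and the existence of a regular sequence of type $(e_1,e_2,e_3)=\ord(d^*,d_2,d_3)$ is governed by comparing this minimal element componentwise with $(e_1,e_2,e_3)$; unwinding the definition of $p=p(d^*)$ yields precisely the mixed inequalities $m_p\le d^*$ and $m_i<e_i$ for $i\ne p$.

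\emph{Sufficiency.} Conversely, suppose 1)--3) hold. Condition 2) is exactly the Gaeta condition, so by the characterization recalled in Section 2 (and Remark 3.8 in \cite{RZ2}) the degree sequence $(s'_1-d_1,\ldots,s'_{t+1}-d_1)$ is realized by some Gorenstein ideal of codimension $3$, whose minimal resolution is the $G_\bu$ appearing in the proof of Theorem \ref{tpari2}. Condition 3), via the minimality characterization of $\min(\delta)=(m_1,m_2,m_3)$ from Theorem \ref{c3b}, guarantees that $(e_1,e_2,e_3)\in\CI_\delta$, so that some such Gorenstein ideal $I_G$ contains a complete intersection $I_Z$ of type $(d^*,d_2,d_3)$. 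Setting $I'_Q=I_Z:I_G$ and invoking Theorem \ref{tpari2}, the ideal $I'_Q$ is an almost complete intersection whose graded Betti numbers reproduce the prescribed resolution, and condition 1) ensures the numerical consistency of the shifts. The main obstacle I anticipate is the bookkeeping in condition 3): correctly translating the three-way definition of $p(d^*)$ into the componentwise comparison between $\min(\delta)=(m_1,m_2,m_3)$ and $(e_1,e_2,e_3)=\ord(d^*,d_2,d_3)$, and verifying that the single relaxed inequality $m_p\le d^*$ at position $p$ (versus strict $m_i<e_i$ elsewhere) is exactly what the containment of a regular sequence of type $(d^*,d_2,d_3)$ demands. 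This requires carefully matching the ordering conventions of $\ord$ with the case distinctions in $p(d^*)$ and appealing to the explicit computation of $\min(\delta)$ in Theorem 3.6 of \cite{RZ3}.
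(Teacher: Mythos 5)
Your route is the paper's own: condition 1) from Proposition \ref{sq}, condition 2) as the Gaeta conditions for the linked degree sequence $(s'_1-d_1,\ldots,s'_{t+1}-d_1)$, and condition 3) via complete intersections contained in Gorenstein ideals, passing back and forth by liaison in a complete intersection of type $(d^*,d_2,d_3)$. The genuine gap is in condition 3). You read it as plain membership $(e_1,e_2,e_3)\in\CI_{\delta}$, i.e.\ as the componentwise comparison $m_i\le e_i$ coming from Theorem \ref{c3b}; but no ``unwinding of the definition of $p(d^*)$'' can produce the asymmetric mix $m_p\le d^*$, $m_i<e_i$ ($i\ne p$) from mere containment, because containment is blind to which of the three entries of $(e_1,e_2,e_3)$ is $d^*$, while condition 3) is not. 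What condition 3) encodes --- and what the paper's proof explicitly invokes --- is a strictly stronger realizability statement (the refined content of Theorem 3.6 of \cite{RZ3}): there must exist $I_G$ with degree sequence $\delta$ containing a complete intersection whose element of degree $d^*$ \emph{is} a minimal generator of $I_G$ while the elements of degrees $d_2,d_3$ are \emph{not} minimal generators of $I_G$. This minimality pattern is exactly what makes the mapping cone for $I_Q=I_Z:I_G$, namely $0\rh G_1^{\vee}(-\vt_Z)\rh G_2^{\vee}(-\vt_Z)\op K_2\rh G_3^{\vee}(-\vt_Z)\op K_1\rh R$, admit exactly one cancellation (the pair $R(-(d_2+d_3))$, dual to the degree-$d^*$ generator of $I_G$), so that the last module has rank $t$; every further complete intersection generator that happens to be minimal in $I_G$ produces a further cancellation and changes the Betti numbers.

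Concretely, your criterion accepts resolutions that are not realizable, so both directions of your argument prove only a weaker, incorrect statement. Take $\delta=(2,2,2,2,2)$ (so $t=4$), $d^*=d_2=d_3=2$, $d_1=1$, $d=7$, $s_1=\cdots=s_4=3$: conditions 1) and 2) hold, and plain containment holds since $\min(\delta)=(2,2,2)$, so your argument would produce an almost complete intersection with resolution $0\rh R(-4)^4\rh R(-4)^2\op R(-3)^5\rh R(-1)\op R(-2)^3\rh R$. No such ideal exists: an artinian ideal of $k[x,y,z]$ minimally generated in degrees $(1,2,2,2)$ is, modulo its linear generator, the ideal $(x,y)^2$ in two variables, hence has Cohen--Macaulay type $2$, never $4$. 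The structural reason is the one above: any Gorenstein $I_G$ with degree sequence $(2,2,2,2,2)$ is generated by quadrics, so every quadric of $I_G$ is a minimal generator, all three complete intersection generators cancel in the mapping cone, and the link has last module $R(-4)^2$; correspondingly the paper's condition 3) fails here ($m_2<e_2$ reads $2<2$), while your symmetric version holds. So the missing idea is precisely the minimal/non-minimal generator dichotomy behind the strict inequalities. (A small separate slip: subtracting $d_1$ from each degree changes $\vartheta$ by $\frac{2n+1}{n}d_1$, not by $3d_1$; your displayed conclusion $d>s'_i+s'_{t+3-i}$ is nevertheless correct, since $\vt_G=d-2d_1$.)
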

\begin{proof}
If it is the graded minimal free resolution of an almost complete intersection algebra $A_Q=R/I_Q$ of codimension $3,$ 
then the first condition holds by Proposition \ref{sq}. The second condition follows by the Gaeta conditions, i.e. we have that
\begin{multline*}
  \vt_G > s'_i-d_1+s'_{t+3-i}-d_1 \iff d^*+d_2+d_3-d_1 > s'_i-d_1+s'_{t+3-i}-d_1 \\ 
	\iff d>s'_i+s'_{t+3-i} \text{ for } 2\le i\le t+1.
\end{multline*}
The third condition follows from the fact that there exists a Gorenstein ideal $I_G$ with minimal generators 
of degrees $(s'_1-d_1,\ldots,s'_t-d_1),$ linked to $I_Q,$ in a complete intersection of type $(d^*,d_2,d_3),$
with the generator of degree $d^*$ minimal for $I_G$ and the generators of degrees $d_2$ and $d_3$ not minimal
for $I_G.$
\par
Conversely if the three conditions hold, then we can build a Gorenstein ideal $I_G$ whose minimal generators have degrees 
$(s'_1-d_1,\ldots,s'_t-d_1)$ and such that it is possible to link $I_G$ in a complete intersection $I_Z$ of type $(d^*,d_2,d_3),$
with the generator of degree $d^*$ minimal for $I_G$ and the generators of degrees $d_2$ and $d_3$ not minimal
for $I_G.$ The ideal $I_Q=I_Z:I_Q$ is the almost complete intersection ideal whose graded minimal free resolution is the assigned one.
\end{proof}

If $t$ is odd we can proceed analogously.

\begin{thm} \label{strdispari}
If $t$ is odd, let us consider a resolution of the type
 \begin{multline*}
  0 \rh \bo_{i=1}^{t}R(-(d-s_i)) \rh R(-(d_2+d_3))\op\bo_{i=1}^{t+2}R(-s'_i) \rh \\
	\rh R(-d^*)\oplus\bigoplus_{i=1}^{3}R(-d_i) \rh R \rh A_Q \rh 0
 \end{multline*}
where $d=d_1+d_2+d_3+d^*,$ $d_1\le d_2\le d_3,$ $s_1\le\ldots\le s_{t},$
$(s'_1,\ldots,s'_{t+2})=\ord(d_1+d_2,d_1+d_3,s_1,\ldots,s_t).$ Furthermore let
 $$(m_1,m_2,m_3)=\min (s'_1-d_1,\ldots,s'_{t+2}-d_1),\,\,m_1 \le m_2 \le m_3,$$
$p=p(d^*)$ and $(e_1,e_2,e_3)=\ord(d^*,d_2,d_3).$
\par
It is the graded minimal free resolution of an almost complete intersection algebra of codimension $3$ if and only if 
  \begin{itemize}
	  \item[1)] $\sum_{i=1}^t s_i=\frac{t-1}{2}d+d^*;$
    \item[2)] $d>s'_i+s'_{t+4-i},\,\,\text{for}\,\,2\le i\le t+2;$
		\item[3)] $m_p < d^*$ and $m_i \le e_i$ for $1\le i \le 3,$ $i \ne p.$
	\end{itemize}
\end{thm}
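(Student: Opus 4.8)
The plan is to mirror the proof of Theorem \ref{strpari}, replacing the even-case linkage of Theorems \ref{tpari1} and \ref{tpari2} by its odd-case analogue. In both directions I would use the regular sequence $(f,f_2,f_3)\subset I_Q$ of degrees $(d^*,d_2,d_3)$, let $I_Z$ be the complete intersection it generates, set $I_G=I_Z:I_Q$, and write $\vt_Z=d^*+d_2+d_3$ and $\vt_G=\vt_Z-d_1=d-2d_1$.

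For the direct implication, suppose the displayed complex is the graded minimal free resolution of an almost complete intersection $A_Q=R/I_Q$ with $t=t_Q$ odd. Condition 1) is Proposition \ref{sq} 2). For the other two I would first compute the minimal resolution of $I_G$ by mapping cone, exactly as in Theorem \ref{tpari1}: with $F_2'\cong\bo_{1\le i<j\le 3}R(-(d_i+d_j))$ taken from Theorem \ref{str3} one finds $F_2'^{\vee}(-\vt_Z)\cong R(-d^*)\op R(-(d^*+d_2-d_1))\op R(-(d^*+d_3-d_1))$ and $F_3^{\vee}(-\vt_Z)\cong\bo_{i=1}^{t}R(-(s_i-d_1))$. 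Dually to the even case, the summand now occurring at both the generator level and the first syzygy level is $R(-d^*)$; cancelling it leaves $I_G$ minimally generated in the degrees $(s'_i-d_1)_{i=1}^{t+2}$ with $(s'_i)=\ord(d_1+d_2,d_1+d_3,s_1,\ldots,s_t)$, so that the generators of $I_Z$ of degrees $d_2,d_3$ are minimal for $I_G$ while the one of degree $d^*$ is not. Condition 2) is then the Gaeta inequality $\vt_G>(s'_i-d_1)+(s'_{t+4-i}-d_1)$ for $I_G$; adding $2d_1$ and using $\vt_G+2d_1=d$ turns it into $d>s'_i+s'_{t+4-i}$, and the symmetry $i\leftrightarrow t+4-i$ makes the range $2\le i\le t+2$ record exactly these inequalities. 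Finally, condition 3) encodes the minimality pattern just found: the strict inequality marks the non-minimal complete-intersection generator and the non-strict ones the minimal generators, so here $m_p<d^*$ (degree $d^*$ non-minimal) and $m_i\le e_i$ for $i\ne p$ (degrees $d_2,d_3$ minimal).

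For the converse I would assume 1)--3). By 1) together with the Gaeta inequalities 2), Remark 3.8 in \cite{RZ2} provides a codimension-$3$ Gorenstein ideal with minimal generating degrees $(s'_i-d_1)$ and socle degree $\vt_G$. By 3), read through the explicit minimal element of $\CI_{\delta}$ in Theorem \ref{c3b} (Theorem 3.6 in \cite{RZ3}), one such ideal $I_G$ contains a complete intersection $I_Z$ of type $(d^*,d_2,d_3)$ with the degree-$d^*$ generator non-minimal and the generators of degrees $d_2,d_3$ minimal for $I_G$. Since linkage in a complete intersection is an involution, $I_Q=I_Z:I_G$ is then an almost complete intersection of codimension $3$, and the mapping cone applied to the resolution of $I_G$ (the direct computation above read backwards) returns precisely the displayed minimal free resolution.

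I expect condition 3) to be the main obstacle, specifically the bookkeeping of strict versus non-strict inequalities. The hard part will be arguing from Theorem \ref{c3b} that a complete intersection of type $(d^*,d_2,d_3)$ can be placed inside $I_G$ with exactly the degree-$d^*$ generator non-minimal: this is what forces the strict $m_p<d^*$ while leaving $m_i\le e_i$ at the other two positions, the exact reversal of the even case of Theorem \ref{strpari}. The reason is precisely the switch, noted above, of which Koszul summand ($R(-d^*)$ here, $R(-d_2)\op R(-d_3)$ in the even case) becomes redundant in the mapping cone. As in the passage from Theorem \ref{tpari1} to Theorem \ref{tpari2}, the borderline situation in which this cancellation is not automatic should be absorbed by the existence results of \cite{RZ2} and \cite{RZ3}, which furnish an ideal with the prescribed graded Betti numbers even when the naive mapping cone fails to be minimal.
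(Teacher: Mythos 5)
Your proposal is correct and takes essentially the same approach as the paper: the paper's entire proof of Theorem \ref{strdispari} is the single sentence ``Analogous to the proof of Theorem \ref{strpari},'' and your argument is precisely that analogy carried out, including the key point that in the odd case the summand $R(-d^*)$ (rather than $R(-d_2)\oplus R(-d_3)$) is the one cancelled in the mapping cone, which is what flips the strict and non-strict inequalities in condition 3. Your mapping-cone computation, the Gaeta-condition translation of condition 2, and the appeal to Remark 3.8 of \cite{RZ2} and Theorem 3.6 of \cite{RZ3} for the converse (with the borderline non-minimal case absorbed as in Theorem \ref{tpari2}) all agree with what the paper does in the even case via Theorems \ref{tpari1}, \ref{tpari2} and \ref{strpari}.
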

\begin{proof}
Analogous to the proof of Theorem \ref{strpari}.
\end{proof}

\begin{cor} \label{mon1}
Lat $A_Q=R/I_Q$ be an almost complete intersection algebra of codimension $3,$ whose Cohen-Macaulay type is $t_Q=2.$
\par
Let
\begin{multline*}
   0 \rh R(-(d-s_1))\oplus R(-(d-s_2)) \rh R(-s_1)\oplus R(-s_2) \oplus \\
	 \oplus R(-(d_1+d^*)) \oplus R(-(d_2+d^*)) \oplus R(-(d_3+d^*)) \rh \\
	 \rh R(-d_1)\oplus R(-d_2) \oplus R(-d_3) \oplus R(-d^*) \rh I_Q \rh 0 .
 \end{multline*}
be its graded minimal free resolution, where $d=d_1+d_2+d_3+d^*,$ $d_1\le d_2\le d_3,$ $s_1\le s_2.$
\par
Then $I_Q$ has the same graded Betti numbers of the monomial ideal
 $$J=(x^{d_2},y^{d_3},z^{d^*},x^{s_2-d_3}y^{s_1-d_2}).$$
\end{cor}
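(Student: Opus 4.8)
The plan is to read off the graded minimal free resolution of the monomial ideal $J$ from Example \ref{mont2} and to check, twist by twist, that it agrees with the resolution of $I_Q$ displayed in the statement. Writing $J=(x^{a_1},y^{a_2},z^{a_3},x^{b_1}y^{b_2})$ in the notation of Example \ref{mont2}, the assertion amounts to the substitution
$$a_1=d_2,\quad a_2=d_3,\quad a_3=d^*,\quad b_1=s_2-d_3,\quad b_2=s_1-d_2.$$
The crucial numerical input is Proposition \ref{sq}(1) with $t=2$, which gives $s_1+s_2=d-d^*=d_1+d_2+d_3$; equivalently $b_1+b_2=s_1+s_2-d_2-d_3=d_1$. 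First I would record this identity, since it is exactly what forces the twists to line up.

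Before invoking Example \ref{mont2} I must verify its hypotheses, namely $a_3>0$ and $0<b_1<a_1$, $0<b_2<a_2$; these read $d^*>0$, $d_3<s_2<d_2+d_3$ and $d_2<s_1<d_2+d_3$. Here $d^*>0$ is clear because $f\in(x_1,x_2,x_3)$. For the upper bounds I would use that the generator degrees $s_1-d_1$ and $s_2-d_1$ of the linked ideal are positive: from $s_1>d_1$ and the identity one gets $s_2=d_1+d_2+d_3-s_1<d_2+d_3$, and then $s_1\le s_2<d_2+d_3$ finishes both upper bounds. The lower bounds $s_1>d_2$ and $s_2>d_3$ are the delicate point, and are where I expect the real work to lie. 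For these I would appeal to Theorems \ref{tpari1} and \ref{tpari2}: up to graded Betti numbers $I_Q$ is linked, in a complete intersection $\tilde I_Z$ of type $(d^*,d_2,d_3)$, to a Gorenstein ideal $\tilde I_G$ (hence, having $t+1=3$ generators, a complete intersection) whose generator degrees are $(d^*,s_1-d_1,s_2-d_1)$, in such a way that the generators of $\tilde I_Z$ of degrees $d_2$ and $d_3$ are \emph{not} minimal for $\tilde I_G$. Non-minimality of these two generators gives $s_1-d_1<d_2$ and $s_2-d_1<d_3$, which by the identity are equivalent to $s_2>d_3$ and $s_1>d_2$. This equivalence between the non-minimality conditions already exploited in the proof of Theorem \ref{strpari} and the required strict inequalities is the main obstacle, and I would state and justify it carefully rather than wave at it.

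With the hypotheses in place, Example \ref{mont2} supplies the graded minimal free resolution of $J$, and it remains to substitute. The module of generators carries the twists $a_1,a_2,a_3,b_1+b_2=d_2,d_3,d^*,d_1$, matching those of $I_Q$. In homological degree two the five twists $b_1+a_2,\,a_1+b_2,\,a_1+a_3,\,a_2+a_3,\,b_1+b_2+a_3$ become $s_2,\,s_1,\,d_2+d^*,\,d_3+d^*,\,d_1+d^*$, which is exactly the twist pattern of $F_2$. Finally the last module has twists $a_1+b_2+a_3$ and $b_1+a_2+a_3$, equal to $s_1+d^*$ and $s_2+d^*$; using $d=d_1+d_2+d_3+d^*$ together with the identity, these are $d-s_2$ and $d-s_1$, the twists of $F_3$. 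Since all three free modules then agree as graded modules, $I_Q$ and $J$ have the same graded Betti numbers. I would display these three substitutions explicitly (they are short once $b_1+b_2=d_1$ is in hand) and keep the comparison at the level of multisets of twists, so that no ordering issue intervenes.
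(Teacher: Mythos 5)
Your proposal follows the paper's own route: the identity $s_1+s_2=d_1+d_2+d_3$ (condition 1 of Theorem \ref{strpari}, i.e.\ Proposition \ref{sq}), the strict inequalities $d_2<s_1$ and $d_3<s_2$, and then the twist-by-twist match with Example \ref{mont2} (your substitution table and your handling of the upper bounds $s_i<d_2+d_3$ are exactly the verification the paper leaves implicit). However, the step you yourself flag as the main obstacle is, as written, a genuine gap. The inference ``non-minimality of the degree-$d_2$ and degree-$d_3$ generators gives $s_1-d_1<d_2$ and $s_2-d_1<d_3$'' is not valid on its own: non-minimality of an element of degree $d_2$ in an ideal whose minimal generators have degrees $d^*,\,s_1-d_1,\,s_2-d_1$ only says that \emph{some} minimal generator has degree $<d_2$, and since $d^*$ may perfectly well be smaller than $d_2$, this does not pin down $s_1-d_1<d_2$ (and likewise for the second inequality).

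To close the gap you must also use that the three elements of degrees $d^*,d_2,d_3$ form a regular sequence. If $s_1-d_1\ge d_2$, then the degree-$d_2$ element lies in $\gothic{m}I_G$ while the only minimal generator of $I_G$ of degree $<d_2$ is the degree-$d^*$ one, which (being a minimal generator) can be taken as the first element of the regular sequence; hence the degree-$d_2$ element is a multiple of the degree-$d^*$ element, contradicting regularity. If $s_2-d_1\ge d_3$, then all three elements of the regular sequence lie in the subideal generated by the at most two minimal generators of $I_G$ of degree $<d_3$, an ideal of height $\le 2$, contradicting height $3$. Alternatively, argue as the paper does: quote condition 3 of Theorem \ref{strpari} directly; since for $t=2$ the linked Gorenstein ideal has three generators it is a complete intersection, so $(m_1,m_2,m_3)=\ord(d^*,s_1-d_1,s_2-d_1)$, and a short case analysis on the position $p(d^*)$ yields precisely $s_1-d_1<d_2$ and $s_2-d_1<d_3$. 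With either argument supplied, the rest of your proof is correct and coincides with the paper's.
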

\begin{proof}
By condition $1$ in Theorem \ref{strpari}
  $$s_1+s_2=d_1+d_2+d_3 \iff (s_1-d_2)+(s_2-d_3)=d_1.$$
By condition $3$ in Theorem \ref{strpari}
  $$s_1-d_1<d_2 \text{ and } s_2-d_1<d_3$$
i.e. $$d_2+d_3-s_2<d_2 \text{ and } d_2+d_3-s_1<d_3 \iff d_3<s_2 \text{ and } d_2<s_1.$$
So we can consider the ideal $J=(x^{d_2},y^{d_3},z^{d^*},x^{s_2-d_3}y^{s_1-d_2}).$ 
Using Example \ref{mont2} we can verify that the graded Betti numbers of $J$ are the assigned ones.
\end{proof}

When the Cohen-Macaulay type is $t_Q=3,$ the graded Betti numbers of an almost complete intersection algebra not always 
can be realized by a monomial ideal.

\begin{exm}
Let us consider the ideal $I_Q=(x^2,xz+y(x+y+z),ax+by,f),$ with $a$ and $b$ general linear forms and $f$ general quintic form. It has
the following graded minimal free resolution
  \begin{multline*}
	0 \rh R(-5)^2 \op R(-7) \rh R(-4)^4 \op R(-6)^2 \rh R(-2)^3 \op R(-5) \rh I_Q \rh 0.
	\end{multline*}
We have that $d^*=5.$ In Example \ref{mont3} we computed the resolution of whatever artinian monomial almost complete intersection ideal of codimension $3$ with $t_Q=3.$
Using the same notation we must have 
  $$b_1+b_2+b_3=5 \rw b_1=1,b_2=2,b_3=2,$$
(up to change the names of $b_i$'s). So the ideal should be $$J=(x^2,y^2,z^2,xy^2z^2)=(x^2,y^2,z^2),$$ a contradiction.
\end{exm}

Now we characterize, in the case $t_Q=3,$ the graded Betti numbers which can be realized using a monomial ideal.

\begin{prp} \label{mon2}
Let $A_Q=R/I_Q$ be any almost complete intersection algebra of codimension $3,$ $t=t_Q=3.$ 
Let 
	\begin{multline*}
  0 \rh \bo_{i=1}^{3}R(-(d-s_i)) \rh \bo_{1 \le i<j \le 3}R(-(d_i+d_j))\op\bo_{i=1}^{3}R(-s_i) \rh \\
	\rh R(-d^*)\oplus\bigoplus_{i=1}^{3}R(-d_i) \rh R \rh A_Q \rh 0
 \end{multline*} 
be its graded minimal free resolution, with $d_1 \le d_2 \le d_3,$ $s_1 \le s_2 \le s_3.$
\par
Then the graded Betti numbers of $I_Q$ can be realized by an artinian monomial ideal iff
  $$d^* < s_i < d^*+d_i,\,\,\text{ for } 1 \le i \le 3.$$
\end{prp}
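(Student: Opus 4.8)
The plan is to reduce everything to the explicit monomial resolution of Example \ref{mont3}. First I would observe that any artinian \emph{monomial} almost complete intersection of codimension $3$ must contain a pure power of each variable as a minimal generator (since $x^N\in I_Q$ forces a generator dividing $x^N$, and such a divisor is a power of $x$); having exactly four minimal generators, the fourth must be a mixed monomial $x^{b_1}y^{b_2}z^{b_3}$ with $0<b_i<a_i$. When all three exponents are positive we are in the situation of Example \ref{mont3} and $t_Q=3$, while if only two are positive we fall into Example \ref{mont2} and $t_Q=2$. Hence for $t_Q=3$ the only candidate is $J=(x^{a_1},y^{a_2},z^{a_3},x^{b_1}y^{b_2}z^{b_3})$ with $0<b_i<a_i$, whose resolution is recorded in Example \ref{mont3}.

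Next I would set up the dictionary between the two resolutions. The intrinsic invariant $d^*$ and the complementary summand $F_2'\cong\bo_{1\le i<j\le 3}R(-(d_i+d_j))$ (via $F_2\cong F_3^{\vee}(-d)\oplus F_2'$, Theorem \ref{fc} and the Remark following Theorem \ref{str3}) force the identification $d^*=b_1+b_2+b_3$ and, at the level of multisets, $\{d_1,d_2,d_3\}=\{a_1,a_2,a_3\}$; so I may take $a_i=d_i$. Subtracting the $F_2'$ degrees from the degree multiset of $F_2$ then identifies the remaining middle shifts: as multisets $\{s_1,s_2,s_3\}=\{d^*+(a_i-b_i)\}_{i=1}^3$.

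For the forward implication, from $0<b_i<a_i$ I immediately get $d^*<d^*+(a_i-b_i)<d^*+a_i=d^*+d_i$ for each $i$. The remaining point is to check that these per-index bounds survive sorting, i.e. that the $i$-th smallest of the $d^*+(a_i-b_i)$ lies below $d^*+d_i$. Writing $v_i=a_i-b_i$ with $0<v_i<a_i$ and $a_1\le a_2\le a_3$, this is the elementary order-statistic fact $v_{(i)}<a_i$: indeed $v_{(1)}\le v_1<a_1$, and $v_{(3)}\le\max_j v_j<a_3$, while $v_1<a_1\le a_2$ together with $v_2<a_2$ puts two of the values below $a_2$, so the median satisfies $v_{(2)}<a_2$. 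Adding $d^*$ gives $d^*<s_i<d^*+d_i$.

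For the converse I reverse the construction: given $d^*<s_i<d^*+d_i$, I set $b_i:=d_i-(s_i-d^*)$, so the upper bound yields $b_i>0$ and the lower bound yields $b_i<d_i$. Because the assigned data is the resolution of an honest almost complete intersection with $t=3$, condition $1$ of Theorem \ref{strdispari} (equivalently Proposition \ref{sq}) gives $\sum_i s_i=d+d^*$, whence $\sum_i b_i=\sum_i d_i-\sum_i s_i+3d^*=(d-d^*)-(d+d^*)+3d^*=d^*$; thus $J=(x^{d_1},y^{d_2},z^{d_3},x^{b_1}y^{b_2}z^{b_3})$ has $0<b_i<d_i$, and by Example \ref{mont3} its middle shifts are $d^*+(d_i-b_i)=s_i$, so it reproduces exactly the assigned graded Betti numbers. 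I expect the genuine obstacle here to be bookkeeping rather than depth: the one step that actually needs care is verifying that the sorted pairing of the $s_i$ with the $d_i$ is the correct matching (the order-statistic comparison above, and its use in checking $0<b_i<d_i$ for the \emph{identity} pairing), together with ensuring that the splitting $F_2\cong F_3^{\vee}(-d)\oplus F_2'$ unambiguously separates the shifts $s_i$ from the shifts $d_i+d_j$ even when some of these values happen to coincide.
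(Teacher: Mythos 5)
Your proof is correct and follows essentially the same route as the paper: the forward direction is the same order-statistics argument showing the sorted shifts $s_i$ stay strictly between $d^*$ and $d^*+d_i$, and the converse uses the identical construction $J=(x^{d_1},y^{d_2},z^{d_3},x^{d^*+d_1-s_1}y^{d^*+d_2-s_2}z^{d^*+d_3-s_3})$ together with the sum condition $\sum_i s_i=d+d^*$ and Example \ref{mont3}. Your preliminary justification that an artinian monomial almost complete intersection with $t_Q=3$ must have the form of Example \ref{mont3} is a point the paper leaves implicit, but it does not change the substance of the argument.
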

\begin{proof}
Let us suppose that the graded Betti numbers of $I_Q$ can be realized by a monomial ideal 
  $J=(x^{a_1},y^{a_2},z^{a_3},x^{b_1}y^{b_2}z^{b_3}) \subset k[x,y,z],$ with $0<b_i<a_i,$ for $i=1,2,3.$
We set 
 $$v_1=a_1+b_2+b_3,\,v_2=b_1+a_2+b_3,\,v_3=b_1+b_2+a_3.$$
We have that $(s_1,s_2,s_3)=\ord(v_1,v_2,v_3).$ Note that $d^*<v_i<d^*+d_i$ for $1 \le i \le 3,$
therefore $d^*<s_i$ for $1 \le i \le 3.$ Moreover $s_1\le v_1<d^*+d_1.$
\par
Now we consider $s_2.$ If $s_2=v_i$ per $i\le 2$ we deduce that 
   $$s_2=v_i<d^*+d_i\le d^*+d_2.$$
If $s_2=v_3$ then $s_3=v_i$ with $i\le 2,$ so
   $$s_2\le s_3=v_i<d^*+d_i\le d^*+d_2.$$
Finally
  $$s_3=v_i<d^*+d_i\le d^*+d_3.$$	
Conversely if $d^* < s_i < d^*+d_i,$ for $1 \le i \le 3,$ we can consider the monomial ideal
  $$J=(x^{d_1},y^{d_2},z^{d_3},x^{d^*+d_1-s_1}y^{d^*+d_2-s_2}z^{d^*+d_3-s_3}).$$
It has $4$ minimal generators since $d^*<s_i$ implies that $d^*+d_i-s_i<d_i.$
Moreover the exponents are all positive since $s_i<d^*+d_i.$ It is easy to check, using Example \ref{mont3}, 
that $J$ has the requested graded Betti numbers.
\end{proof}

\vspace{1cm}

{\sc Dipartimento di Matematica e Informatica, \\ Universit\`a di Ca\-tania, \\
                  Viale A. Doria 6, 95125 Catania, Italy}\par
{\it E-mail address: }{\tt zappalag@dmi.unict.it} 

\end{document}